\documentclass[article,11pt]{elsarticle}
\usepackage[left=2.5cm, right=2.5cm, top=2cm, bottom=2cm]{geometry}

\usepackage{amsmath, amsthm, amssymb}
\usepackage{graphicx}
\usepackage{tikz}
\usepackage{url}

\newtheorem{lemma}{Lemma}
\newtheorem{theorem}{Theorem}
\newtheorem{corollary}{Corollary}
\newtheorem{remark}{Remark}
\newtheorem{definition}{Definition}

\begin{document}

\begin{frontmatter}

\title{A Unified Linear Programming Approach for Maximizing Vertex-Degree-Based Indices of Trees}

\author[nis]{Milan Ba\v{s}i\'{c}\corref{cor1}}
\ead{basic_milan@yahoo.com}
\ead{milan.basic@pmf.edu.rs}

\cortext[cor1]{Corresponding author}

\address[nis]{Faculty of Sciences and Mathematics, University of Ni\v{s}, Vi\v{s}egradska 33, 18000 Ni\v{s}, Serbia}

\begin{abstract}
The problem of maximizing vertex-degree-based topological indices over trees with a prescribed order and maximum degree $\Delta$ has traditionally relied on ad-hoc combinatorial arguments. In this paper, we establish a unified linear programming approach that systematically resolves these extremal problems. By formulating the optimization over degree-pair multiplicities, the search for an extremal tree reduces to minimizing a discrete dual penalty. Treating vertex degrees as continuous variables yields an analytical single-variable test condition that guarantees global optimality.
We apply this framework across a diverse collection of invariants. We prove that the standard basic feasible solution is optimal for the Tangent Sombor, $SO_1$, $SO_2$, Albertson, and $\sigma$-irregularity indices for all $\Delta \ge 3$. However, the dual optimality conditions establish exact analytical thresholds at which the standard basis loses optimality, necessitating a transition to alternative basic feasible solutions. For the Arithmetic-Geometric ($AG$) index, the standard basis is optimal strictly for $\Delta \ge 9$; for smaller maximum degrees ($3 \le \Delta \le 8$), optimization shifts to an alternative edge basis, allowing us to explicitly construct and characterize all extremal trees across congruence classes, including chemical trees ($\Delta \in \{3, 4\}$). Conversely, for the Triangle Volume index ($SO_{11}$), the standard basis is optimal exclusively for $\Delta \le 8$, while for $\Delta \ge 9$ we analytically derive a novel, degree-dependent optimal basis to explicitly construct its extremal configurations.
\end{abstract}

\begin{keyword}
vertex-degree-based indices \sep trees \sep maximum degree \sep linear programming \sep Sombor index \sep extremal graphs

\MSC[2020] 05C09 \sep 90C05 \sep 11A07 \sep 90C27 \sep 26C10

\end{keyword}

\end{frontmatter}

\section{Introduction}
\label{sec:intro}

Degree-based topological indices are fundamental tools in chemical graph theory and combinatorial optimization, serving as quantitative descriptors of molecular structure. A generalized vertex-degree-based (VDB) index is defined as $TI(G) = \sum_{uv \in E} f(d_u, d_v)$, where $f(x,y)$ is a symmetric weight function applied to the degrees of adjacent vertices $u$ and $v$. Classical examples of such invariants include the Arithmetic-Geometric ($AG$) index \cite{ag_amc_reference} and the Albertson irregularity index \cite{albertson1997}. A related quantitative measure, the $\sigma$-irregularity index, serves as a quadratic analogue to Albertson's index and has been extensively studied for its extremal graph properties \cite{abdo2018sigma, gutman2018inverse}.

A geometry-motivated approach to defining VDB indices was introduced by Gutman \cite{gutman2021sombor}, who mapped the end-vertex degrees of an edge to a two-dimensional coordinate plane. This paradigm gave rise to the Sombor indices and initiated the development of numerous related structural descriptors \cite{rada2021general, ravindrana2025review}. Returning to this geometric construction, Gutman \cite{gutman2022back} expanded the framework by introducing six \emph{Sombor-index-like} invariants, denoted as $SO_1$ through $SO_6$.

Following this conceptualization, further research diverged into two main directions. The first direction focused on proposing new topological invariants derived from explicit geometric, trigonometric, and calculus-based properties within the coordinate model. By analyzing trigonometric relations, \"{O}z and Gutman \cite{oz2025more} defined the tangent Sombor index ($Tan$). Utilizing calculus and solids of revolution, \"{O}z \cite{oz2024novel} further extended this framework to three dimensions, introducing a series of five volume- and surface-area-based indices ($SO_7$ through $SO_{11}$), including the Triangle Volume index ($SO_{11}$).

The second direction concentrated on resolving extremal problems for these invariants. Tang et al. \cite{tang2023trees} initiated this study by posing open problems regarding the extremal values of Sombor-index-like invariants over general trees, molecular trees, and connected graphs. Subsequent works by Ali et al. \cite{ali2023solutions} and Gao \cite{gao2024extremal} addressed these fundamental questions. Building on this foundation, recent research has introduced a prescribed maximum degree $\Delta$ as a critical structural constraint, generalizing the natural restriction of chemical trees ($\Delta \le 4$). Exact maximum values and explicit structural characterizations were recently derived for the $\sigma$-irregularity index \cite{kovijanic2024chemical, kovijanic2026delta5, basic2026sigma_arxiv, knor2026complete}, as well as for the $SO_5$ and $SO_6$ invariants \cite{basic2026filomat, basic2026cmj} on trees with bounded maximum degree.

Despite the success of these approaches, the standard methodologies in the literature generally remain index-specific. Establishing an extremal tree typically involves localized structural manipulations---such as vertex-shifting or edge-lifting---which demand separate combinatorial proofs for each distinct weight function $f(x,y)$.

In this paper, we depart from index-specific combinatorial techniques and establish a unified linear programming approach. By formulating the optimization problem over degree-pair multiplicities, the search for an extremal tree reduces to minimizing a discrete dual penalty. By treating vertex degrees as continuous variables, we derive an analytical single-variable test condition that guarantees global optimality.

We apply this methodology to determine the maximal trees across a diverse collection of invariants. We prove that the standard basic feasible solution is optimal for the Tangent, $SO_1$, $SO_2$, Albertson, and $\sigma$-irregularity indices for all $\Delta \ge 3$. However, the dual optimality conditions establish exact analytical thresholds at which the standard basis loses optimality, necessitating a transition to alternative edge bases. For the $AG$ index, the standard basis remains optimal strictly for $\Delta \ge 9$; for $3 \le \Delta \le 8$, the optimization shifts to an alternative basis, allowing us to explicitly construct all extremal trees across congruence classes, including chemical trees ($\Delta \in \{3, 4\}$). Conversely, we resolve the extremal configuration for the Triangle Volume index ($SO_{11}$), proving that the standard basis is optimal exclusively for $\Delta \le 8$. For $\Delta \ge 9$, we analytically derive a novel, degree-dependent optimal basis to explicitly construct its extremal trees, demonstrating that the optimal internal structure scales asymptotically as $\Delta/3$.

The paper is organized as follows. Section 2 introduces the geometric representation and definitions of the considered indices. Section 3 formulates the continuous linear programming model over degree-pair multiplicities and derives the discrete dual penalty function alongside the single-variable optimality condition. Section 4 characterizes the corresponding bipartite structural class and proves the optimality of the standard basis for the Tangent, $SO_1$, $SO_2$, $AG$, $\sigma$-irregularity, and Albertson indices. Section 5 establishes an alternative optimality criterion for small maximum degrees, explicitly resolving the extremal configurations for the $AG$ index when $\Delta \in \{3, 4\}$. Finally, Section 6 derives the degree-dependent optimal basis and exact extremal trees for the $SO_{11}$ index.

\section{Geometric Representation and Derived Indices}
\label{sec:geometric_representation}

A generalized vertex-degree-based topological index $TI(G)$ of a graph $G$ can be expressed as the sum of individual edge contributions:
\begin{equation}
\label{Eq:General_TI}
TI(G) = \sum_{uv \in E} f(d_u, d_v),
\end{equation}
where $f(x,y)$ is a symmetric real-valued function known as the \textit{topological weight function}. In alignment with the optimization framework established in the previous section, we consistently evaluate $f(x,y)$ under the assumption $x \le y$.

From now on, let us assume that all graphs are simple graphs and that for any edge $uv \in E$, its end-vertex degrees are given by $d_u = x$ and $d_v = y$ with $x < y$. The geometric properties of the representation shown in Figure \ref{fig:geometric-edge} yield a rich variety of well-known topological indices.

\begin{figure}[ht]
    \centering
    \begin{tikzpicture}[scale=1.2]
        \draw[->, thick] (0,0) -- (4,0) node[below] {$x$};
        \draw[->, thick] (0,0) -- (0,4) node[left] {$y$};
        \node[below left] at (0,0) {$O$};
        \fill (0,0) circle (1.5pt);

        \coordinate (O) at (0,0);
        \coordinate (A) at (2,3); 
        \coordinate (B) at (3,2);
        \coordinate (C) at (2,0);
        \coordinate (D) at (3,0);

        \fill (A) circle (1.5pt) node[above right] {$A(x,y)$};
        \fill (B) circle (1.5pt) node[right] {$B(y,x)$};
        \fill (C) circle (1.5pt) node[below] {$C(x,0)$};
        \fill (D) circle (1.5pt) node[below] {$D(y,0)$};

        \draw[thick] (O) -- (B) node[midway, below right] {$l_1$};
        \draw[thick] (O) -- (A) node[midway, above left] {$l_2$};
        \draw[thick] (B) -- (A) node[midway, above right] {$l_3$};

        \draw[dashed] (B) -- (D);
        \draw[dashed] (A) -- (C);

        \draw (0.8,0.53) arc (33.69:56.31:0.96) node[midway, right, xshift=1pt, yshift=2pt] {$\alpha$};
    \end{tikzpicture}
    \caption{A geometric representation of an edge $e = uv$ of a graph $G$, connecting vertices $u$ and $v$ of degrees $d_u = x$ and $d_v = y$ ($x < y$). The point $A$ represents the edge $e = uv$, whereas the point $B$ is the respective dual. $O$ denotes the origin of the coordinate system.}
    \label{fig:geometric-edge}
\end{figure}

Based on this geometric representation, the side lengths of the triangle $\triangle AOB$ are given by $l_1 = \sqrt{y^2 + x^2}$, $l_2 = \sqrt{x^2 + y^2}$, and $l_3 = \sqrt{(x-y)^2 + (y-x)^2} = \sqrt{2}(y-x)$ (since $y > x$). The semiperimeter is $s = \frac{1}{2}(l_1 + l_2 + l_3) = \sqrt{x^2+y^2} + \frac{\sqrt{2}}{2}(y-x)$. The area of the triangle naturally evaluates to $area(AOB) = \frac{1}{2}(y^2 - x^2)$. 

We can also express trigonometric invariants based on the angle $\alpha$ between vectors $\vec{OA}$ and $\vec{OB}$:
\begin{align*}
    \sin \alpha = \frac{y^2 - x^2}{x^2 + y^2}, \qquad \cos \alpha = \frac{2xy}{x^2 + y^2}, \qquad \tan \alpha = \frac{y^2 - x^2}{2xy}.
\end{align*}

Furthermore, let us establish the radii of the circumscribed circle ($R$), the inscribed circle ($r$), and two of the excircles ($r_1, r_2$) corresponding to the sides $l_1$ and $l_2$ of $\triangle AOB$:
\begin{align*}
    R &= \frac{l_1 l_2 l_3}{4 \cdot area(AOB)} = \frac{\sqrt{2}(x^2+y^2)}{2(x+y)}, \\[1em]
    r &= \frac{area(AOB)}{s} = \frac{y^2-x^2}{2\sqrt{x^2+y^2} + \sqrt{2}(y-x)}, \\[1em]
    r_1 = r_2 &= \frac{area(AOB)}{s - l_1} = \frac{\sqrt{2}}{2}(x+y).
\end{align*}

We recall the definitions of $SO_1$ through $SO_4$ from \cite{oz2025more}, and additionally introduce the indices $SO_5$ and $SO_6$ corresponding to the inscribed circle of $\triangle AOB$:
\begin{equation}
\label{Eq:SO_indices_basic}
\begin{array}{ll}
	SO_1(G)=\displaystyle \sum_{uv\in E} area(AOB),
	&
	SO_2(G)=\displaystyle \sum_{uv\in E}\sin \alpha,
	\\[2em]
	SO_3(G)=\displaystyle \sum_{uv\in E}perim(\Gamma_c),
	&
	SO_4(G)=\displaystyle \sum_{uv\in E}area(\Gamma_c),
	\\[2em]
	SO_5(G)=\displaystyle \sum_{uv\in E}perim(\Gamma_I),
	&
	SO_6(G)=\displaystyle \sum_{uv\in E}area(\Gamma_I),
\end{array}
\end{equation}
where $\Gamma_c$ and $\Gamma_I$ represent the circumscribed circle and the inscribed circle of the triangle $AOB$, while $R$ and $r$ denote their corresponding radii. The expressions for the perimeter and area of these circles are $perim(\Gamma) = 2\pi \rho$ and $area(\Gamma) = \pi \rho^2$, for any corresponding radius $\rho \in \{R, r\}$.

Furthermore, we recall the definitions of the Albertson index ($Alb$), the Tangent Sombor index ($Tan$) \cite{oz2025more}, the sigma irregularity index ($\sigma$), the Arithmetic-Geometric index ($AG$), and the Triangle Volume index ($SO_{11}$):
\begin{equation}
\label{Eq:Additional_indices}
\begin{array}{ll}
	Alb(G)=\displaystyle\sum_{uv\in E}(y-x)=\dfrac{1}{\sqrt{2}}\displaystyle \sum_{uv\in E}l_3, 
	& 
	Tan(G)=\displaystyle \sum_{uv\in E}\dfrac{y^2-x^2}{2xy}, \\[2.5em]
	\sigma(G)=\displaystyle\sum_{uv\in E}(y-x)^2,
    &
    AG(G)=\displaystyle\sum_{uv\in E}\frac{x+y}{2\sqrt{xy}}, \\[2.5em]
    SO_{11}(G)=\displaystyle\sum_{uv\in E}(y^2 x - x^2 y + y^3 - x^3). &
\end{array}
\end{equation}

\section{Linear Optimization Problem and the Standard Basis}
\label{sec:linear_opt}

Let $n$ be the order of a tree $T$ and $\Delta$ be its maximum degree. A tree that maximizes a given topological index $TI(T)$ over the class of all such trees is formally defined as an extremal tree. Let $n_i$ denote the number of vertices of degree $i$, and let $m_{i,j}$ denote the number of edges connecting vertices of degrees $i$ and $j$. The variables $n_i$ and $m_{i,j}$ are strictly related by the following system of identities, which hold for every tree $T$:

\begin{align}
n_1 + n_2 + \cdots + n_{\Delta} &= n, \label{eq:tree-sum-ni}\\
n_1 + 2n_2 + \cdots + \Delta n_{\Delta} &= 2n-2, \label{eq:tree-sum-deg}\\
m_{1,2} + m_{1,3} + \cdots + m_{1,\Delta} &= n_1, \label{eq:tree-ni-1}\\
m_{1,2} + 2m_{2,2} + m_{2,3} + \cdots + m_{2,\Delta} &= 2n_2, \label{eq:tree-ni-2}\\
m_{1,3} + m_{2,3} + 2m_{3,3} + \cdots + m_{3,\Delta} &= 3n_3, \label{eq:tree-ni-3}\\
\intertext{\centering$\vdots$}
m_{1,i}+m_{2,i}+\cdots+m_{i-1,i}+2m_{i,i}+m_{i,i+1}+\cdots+m_{i,\Delta}
&= i n_i, \label{eq:tree-ni-i}\\
\intertext{\centering$\vdots$}
m_{1,\Delta} + m_{2,\Delta} + m_{3,\Delta} + \cdots + 2m_{\Delta,\Delta}
&= \Delta n_\Delta. \label{eq:tree-ni-Delta}
\end{align}

From these structural identities, we can strictly derive two foundational constraints for our optimization model. First, summing the degree-pair variables directly recovers the fundamental tree property regarding the total number of edges, $\sum m_{i,j} = n - 1$. Second, dividing each degree equation \eqref{eq:tree-ni-i} by $i$ and subsequently summing the resulting system over all degrees $1 \le i \le \Delta$ perfectly reconstructs the vertex sum \eqref{eq:tree-sum-ni}, yielding the mathematically elegant identity $\sum \left(\frac{1}{i}+\frac{1}{j}\right)m_{i,j} = n$.

To systematically calculate any generalized vertex-degree-based topological index $TI(T)$, we formulate the objective function as a inear combination of these degree-pair multiplicities and their corresponding topological weights. We order the indices such that the first index is strictly less than the second, evaluating weight functions for $1 \le i \le j \le \Delta$. Thus, the problem of finding the extremal value of a topological index reduces to the following continuous linear program (LP):

\begin{align}
    \text{Maximize} \quad & TI(T) = \sum_{1 \le i \le j \le \Delta} f(i,j)\, m_{i,j} \label{eq:LP-objective-g} \\
    \text{subject to} \quad & \sum_{1 \le i \le j \le \Delta} m_{i,j} = n - 1, \label{eq:LP-constraint-sum-SO} \\
    & \sum_{1 \le i \le j \le \Delta} \Bigl(\frac{1}{i}+\frac{1}{j}\Bigr)\, m_{i,j} = n, \label{eq:LP-constraint-w-SO} \\
    & m_{i,j} \ge 0 \quad \text{for all } 1 \le i \le j \le \Delta. \nonumber
\end{align}

\subsection{Duality and the Gap Function Framework}

To establish the uniqueness of an extremal tree structure, we rely on the fundamental principles of Linear Programming (LP) duality. In combinatorial optimization, a specific graph topology corresponds to a basic feasible solution of the primal LP, where the active edges (the ``basis'') have strictly positive weights ($m_{i,j} > 0$), while all absent edges are non-basic ($m_{i,j} = 0$).

By associating dual variables $y_1$ and $y_2$ with constraints \eqref{eq:LP-constraint-sum-SO} and \eqref{eq:LP-constraint-w-SO}, we construct the dual problem. The dual constraints dictate that for every possible edge type $(i,j)$, the inequality $y_1 + \left(\frac{1}{i}+\frac{1}{j}\right)y_2 \ge f(i,j)$ must hold. 

According to the theorem of \textit{strict complementary slackness}, an optimal graph structure is absolutely unique if and only if:
\begin{enumerate}
    \item For every edge in the optimal basis, the corresponding dual constraint is binding (an exact equality).
    \item For every edge \textit{not} in the optimal basis, the corresponding dual constraint holds as a strict inequality.
\end{enumerate}
We formalize this strict inequality margin as the \textit{dual gap function} (or reduced cost), denoted $G(i,j)$. If $G(i,j) > 0$ for all non-basic edges, introducing any alternative edge would strictly penalize the objective function, thereby guaranteeing the structural uniqueness of the extremal tree.

A direct and powerful consequence of this duality framework is the analytical decomposition of the topological index into an exact continuous upper bound and a discrete penalty function. From the definition of the dual gap function, the topological weight of any edge $(i,j)$ can be algebraically expressed as:
\begin{equation}
f(i,j) = y_1 + \left( \frac{1}{i} + \frac{1}{j} \right) y_2 - G(i,j).
\end{equation}
By substituting this identity into the objective function \eqref{eq:LP-objective-g}, the total topological index expands to:
\begin{equation}
TI(T) = \sum_{1 \le i \le j \le \Delta} \left[ y_1 + \left( \frac{1}{i} + \frac{1}{j} \right) y_2 - G(i,j) \right] m_{i,j}.
\end{equation}
Distributing the summation over the terms and factoring out the dual variables yields:
\begin{equation}
TI(T) = y_1 \sum_{1 \le i \le j \le \Delta} m_{i,j} + y_2 \sum_{1 \le i \le j \le \Delta} \left( \frac{1}{i} + \frac{1}{j} \right) m_{i,j} - \sum_{1 \le i \le j \le \Delta} G(i,j)\, m_{i,j}.
\end{equation}
By substituting the primal equality constraints \eqref{eq:LP-constraint-sum-SO} and \eqref{eq:LP-constraint-w-SO} directly into the first two summations, the dependence on individual degree-pair multiplicities strictly collapses, leaving only the penalty sum. Grouping the terms by the tree order $n$ produces a closed-form universal linear equation:
\begin{equation}
\label{eq:general_ti_decomposition}
TI(T) = (y_1 + y_2)n - y_1 - \sum_{(i,j) \notin \mathcal{B}} G(i,j)\, m_{i,j}.
\end{equation}
Notice that the penalty sum is strictly evaluated over non-basic edges, since strict complementary slackness guarantees $G(i,j) = 0$ for all $(i,j) \in \mathcal{B}$. 

This rigorous algebraic derivation proves that for any chosen optimal basis $\mathcal{B}$, the continuous LP maximum equates to $TI_{LP}^* = A \cdot n + B$, where the precise analytical coefficients are uniquely determined by the dual variables:
\begin{equation}
A = y_1 + y_2, \qquad B = -y_1.
\end{equation}
Consequently, the problem of maximizing any topological index over a discrete graph is mathematically equivalent to minimizing the discrete dual penalty function $\mathcal{P}(T) = \sum G(i,j)\, m_{i,j}$, establishing the foundational optimization mechanism utilized throughout this paper.

To leverage calculus within this optimization framework, we henceforth treat the discrete vertex degrees $x$ and $y$ as continuous real variables on the interval $[1, \Delta]$. This continuous extension allows us to evaluate the analytical behavior and monotonicity of the topological weight functions using partial derivatives, naturally restricting the final structural optimization back to discrete integer evaluations.

\subsection{Optimality of the Standard $\{(1, \Delta), (2, \Delta)\}$ Basis}

Through this duality framework, we formulate a general condition for an index function $f(i,j)$ to attain its unique optimal value on the standard extremal configuration strictly restricted to the edge basis $\{(1, \Delta), (2, \Delta)\}$.

\begin{lemma}[Optimality Criterion]
\label{lem:general-f-condition}
Let $\Delta \ge 3$ and $n \ge \Delta+2$ be integers. The linear program consisting of maximizing \eqref{eq:LP-objective-g} subject to \eqref{eq:LP-constraint-sum-SO} and \eqref{eq:LP-constraint-w-SO} admits a unique optimal basic feasible solution strictly comprising the edges $m_{1, \Delta}$ and $m_{2, \Delta}$, where
\[
m_{1, \Delta} \;=\; \frac{(\Delta-2)n + (\Delta+2)}{\Delta}, 
\qquad
m_{2, \Delta} \;=\; \frac{2(n-\Delta-1)}{\Delta},
\]
if and only if for all valid non-basic pairs $(i,j) \notin \{(1, \Delta), (2, \Delta)\}$, the dual gap function satisfies $G(i,j) > 0$, where:
\begin{equation}
\label{eq:general-gap-function}
G(i,j) = f(1, \Delta) - f(i,j) - y_2 \left( 1 + \frac{1}{\Delta} - \frac{1}{i} - \frac{1}{j} \right),
\end{equation}
and the dual penalty coefficient evaluates to $y_2 = 2 \big( f(1, \Delta) - f(2, \Delta) \big)$.
\end{lemma}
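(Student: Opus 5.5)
The plan is to treat the statement as a standard LP sensitivity computation for a two-constraint program: compute the basic solution, compute the dual vector $(y_1,y_2)$ making both basic dual constraints tight, and identify $G(i,j)$ with the reduced cost. Uniqueness then follows from strict complementary slackness, in the form recalled in the previous subsection.

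\emph{Primal basic solution.} Setting all $m_{i,j}=0$ except $m_{1,\Delta},m_{2,\Delta}$, the constraints \eqref{eq:LP-constraint-sum-SO} and \eqref{eq:LP-constraint-w-SO} become the $2\times 2$ system
\[
m_{1,\Delta}+m_{2,\Delta}=n-1,\qquad \Bigl(1+\tfrac1\Delta\Bigr)m_{1,\Delta}+\Bigl(\tfrac12+\tfrac1\Delta\Bigr)m_{2,\Delta}=n .
\]
Its determinant is $-\tfrac12\neq 0$, so the two columns form a basis. Subtracting $(\tfrac12+\tfrac1\Delta)$ times the first equation from the second gives $\tfrac12 m_{1,\Delta}=n-(n-1)(\tfrac12+\tfrac1\Delta)$, which yields the stated $m_{1,\Delta}$, and then $m_{2,\Delta}=n-1-m_{1,\Delta}$. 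Both values are positive for $\Delta\ge3$; the inequality $m_{2,\Delta}>0$ uses $n\ge\Delta+2$. So the basis is feasible and nondegenerate.

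\emph{Dual solution and gap.} Tightness of the dual constraints on the two basic columns gives
\[
y_1+\Bigl(1+\tfrac1\Delta\Bigr)y_2=f(1,\Delta),\qquad y_1+\Bigl(\tfrac12+\tfrac1\Delta\Bigr)y_2=f(2,\Delta).
\]
Subtracting gives $y_2=2\bigl(f(1,\Delta)-f(2,\Delta)\bigr)$, and then $y_1=f(1,\Delta)-(1+\tfrac1\Delta)y_2$. Substituting into the reduced cost $G(i,j)=y_1+(\tfrac1i+\tfrac1j)y_2-f(i,j)$ gives exactly \eqref{eq:general-gap-function}.

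\emph{The equivalence.} For the direction ``if'', take any feasible $m$. The decomposition \eqref{eq:general_ti_decomposition} shows that its objective equals the basic value minus $\sum_{(i,j)\notin\mathcal B}G(i,j)m_{i,j}$. When all these $G>0$, this difference is strictly negative unless every nonbasic $m_{i,j}$ vanishes. In that case $m$ is forced to be the unique basic solution computed above, so it is the unique optimum.

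For the direction ``only if'', suppose some nonbasic $(i,j)$ has $G(i,j)\le 0$. Since the basis is nondegenerate, we can introduce $m_{i,j}=t>0$ for small $t$ and re-solve the $2\times2$ system for the basic variables, which stay positive. By the same decomposition, the objective changes by $-G(i,j)t\ge 0$. If $G(i,j)<0$ this strictly improves the objective, so the basis is not optimal. If $G(i,j)=0$ it produces a different optimal solution, and pushing $t$ until a basic variable hits zero gives another optimal basic feasible solution, so uniqueness fails.

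\emph{Main obstacle.} The delicate point is the ``only if'' direction in the degenerate-gap case $G=0$: one must argue carefully that the ratio test indeed reaches a different basic feasible solution. Boundedness holds because every coefficient $\tfrac1i+\tfrac1j$ lies in $[\tfrac2\Delta,2]$, so the feasible region is a polytope and the ratio test terminates. One must also interpret ``valid non-basic pairs'' consistently; I would take these to be all pairs $1\le i\le j\le\Delta$ other than the two basic ones.
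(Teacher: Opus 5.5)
Your proposal is correct and follows the paper's route: make the dual constraints tight on the two basic columns, subtract to get $y_2=2\bigl(f(1,\Delta)-f(2,\Delta)\bigr)$, back-substitute for $y_1$, and recognize \eqref{eq:general-gap-function} as the reduced cost. The paper stops there and invokes strict complementary slackness for the equivalence; you go further and solve for the basic values, check nondegeneracy (which makes the dual vector unique, so the criterion is genuinely necessary and sufficient), and prove both directions explicitly via \eqref{eq:general_ti_decomposition} and a ratio-test pivot, so yours is the more complete argument.
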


\begin{proof}
For the basic variables $m_{1, \Delta} > 0$ and $m_{2, \Delta} > 0$, strict complementary slackness dictates that their corresponding dual constraints are exact equalities:
\begin{align}
y_1 + \left(1 + \frac{1}{\Delta}\right)y_2 &= f(1, \Delta), \label{eq:dual_eq1}\\
y_1 + \left(\frac{1}{2} + \frac{1}{\Delta}\right)y_2 &= f(2, \Delta). \label{eq:dual_eq2}
\end{align}
Subtracting \eqref{eq:dual_eq2} from \eqref{eq:dual_eq1} yields $\frac{1}{2}y_2 = f(1, \Delta) - f(2, \Delta)$, explicitly evaluating the dual penalty coefficient as $y_2 = 2 \big( f(1, \Delta) - f(2, \Delta) \big)$. 

By isolating $y_1$ from \eqref{eq:dual_eq1}, we obtain $y_1 = f(1, \Delta) - \left(1 + \frac{1}{\Delta}\right)y_2$. For uniqueness, dual feasibility strictly requires the dual gap function to be positive for all non-basic edges:
\[
G(i,j) = y_1 + \left(\frac{1}{i}+\frac{1}{j}\right)y_2 - f(i,j) > 0.
\]
Substituting the derived expression for $y_1$ directly yields exactly the algebraic formulation of $G(i,j)$ in \eqref{eq:general-gap-function}, thus completing the proof.
\end{proof}

To bypass the complex task of analyzing a two-variable discrete optimization problem, we introduce a dimensional reduction technique. This lemma proves that the worst-case behavior of the gap function always occurs at the boundary where the larger degree reaches its maximum possible value, $y = \Delta$.

\begin{lemma}[Dimensionality Reduction]
\label{lem:reduction}
Let $f(x,y)$ be a continuous, differentiable extension of a topological weight function evaluated for $1 \le x < y \le \Delta$. If $f$ is strictly increasing with respect to its second variable, i.e., $\frac{\partial f}{\partial y} > 0$, and if $f(1, \Delta) > f(2, \Delta)$, then for any fixed $x$, the gap function $G(x,y)$ is strictly decreasing with respect to $y$. Consequently, for any valid discrete degrees $3 \le i < j \le \Delta$, the inequality $G(i,j) \ge G(i, \Delta)$ holds.
\end{lemma}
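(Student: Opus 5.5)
The plan is to differentiate the closed form of the gap function from Lemma~\ref{lem:general-f-condition}, extended to continuous arguments:
\[
G(x,y) = f(1,\Delta) - f(x,y) - y_2\Bigl(1 + \frac{1}{\Delta} - \frac{1}{x} - \frac{1}{y}\Bigr), \qquad y_2 = 2\bigl(f(1,\Delta)-f(2,\Delta)\bigr).
\]
Here $f(1,\Delta)$, $\Delta$ and $y_2$ are constants that do not depend on $(x,y)$. Holding $x$ fixed and differentiating in $y$ gives
\[
\frac{\partial G}{\partial y}(x,y) = -\frac{\partial f}{\partial y}(x,y) - \frac{y_2}{y^2}.
\]
The sign of this derivative then follows from the two hypotheses. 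By assumption $\partial f/\partial y > 0$, so the first term is strictly negative. The assumption $f(1,\Delta) > f(2,\Delta)$ gives $y_2 > 0$, so the second term $-y_2/y^2$ is also strictly negative for every $y \ge 1$. Hence $\partial G/\partial y < 0$ throughout the region $1 \le x < y \le \Delta$. In words, $G(x,\cdot)$ is strictly decreasing for each fixed $x$, which is the first claim.

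For the discrete consequence, fix integers $3 \le i < j \le \Delta$. The horizontal segment $\{(i,t): j \le t \le \Delta\}$ lies in the region $x<y\le\Delta$, where $f$ is continuous and differentiable. By the mean value theorem (or by integrating the derivative along the segment),
\[
G(i,\Delta) - G(i,j) = \int_j^\Delta \frac{\partial G}{\partial y}(i,t)\,dt \le 0,
\]
with equality exactly when $j=\Delta$. This yields $G(i,j) \ge G(i,\Delta)$, with strict inequality whenever $j<\Delta$.

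The only step needing care is the domain issue. Differentiability is assumed only on the open region $x<y$, so I need the segment from $(i,j)$ to $(i,\Delta)$ to stay inside it. This holds because $t \ge j > i$ along the segment.

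I do not expect a genuine obstacle. The argument is a one-line derivative computation combined with a sign check on $y_2$. The restriction $i \ge 3$ is not used in the monotonicity itself; it just marks the range where the lemma will be applied later, after the pairs with $i\in\{1,2\}$ have been treated separately.
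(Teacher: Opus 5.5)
Your proposal is correct and follows the paper's proof essentially step for step. Both compute $\partial G/\partial y = -\partial f/\partial y - y_2/y^2$ and conclude that it is negative because $\partial f/\partial y>0$ and $y_2 = 2\bigl(f(1,\Delta)-f(2,\Delta)\bigr)>0$. Your mean value theorem step along the segment $\{(i,t): j\le t\le \Delta\}$, and your check that this segment stays in the region $x<y$, spell out details the paper leaves implicit.
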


\begin{proof}
By definition, the gap function extended to continuous variables $x$ and $y$ is given by:
\begin{equation}
G(x,y) = f(1,\Delta) - f(x,y) - y_2 \left( 1 + \frac{1}{\Delta} - \frac{1}{x} - \frac{1}{y} \right),
\end{equation}
with $y_2 > 0$ since $f(1, \Delta) > f(2, \Delta)$. 

Taking the partial derivative of $G(x,y)$ with respect to the second variable $y$, we obtain:
\begin{equation}
\frac{\partial G}{\partial y} = -\frac{\partial f(x,y)}{\partial y} - \frac{y_2}{y^2}.
\end{equation}
Since we assumed that $\frac{\partial f}{\partial y} > 0$ and knowing that $y_2 > 0$, both terms on the right side of the equation are strictly negative. Therefore, $\frac{\partial G}{\partial y} < 0$ for all valid $x, y$. 

This implies that for a fixed $x$, the function $G(x,y)$ monotonically decreases as $y$ increases. Since the maximum possible degree in the graph is bounded by $\Delta$, the function attains its global minimum with respect to $y$ at the upper boundary $y = \Delta$. Thus, restricted to discrete vertex degrees, we conclude that $G(i,j) \ge G(i, \Delta)$ for all $1 \le i < j \le \Delta$.
\end{proof}

\begin{theorem}[Main Optimality Theorem]
\label{thm:main_1D}
Let $T$ be a tree of order $n$ and maximum degree $\Delta \ge 3$. Let $f$ be a topological weight function possessing a continuous and differentiable extension $f(x,y)$ for $1 \le x < y \le \Delta$. If $f(x,y)$ is strictly increasing with respect to its second variable (i.e., $\frac{\partial f}{\partial y} > 0$) and satisfies $f(1, \Delta) > f(2, \Delta)$, define the single-variable test function $h(x)$ on the interval $[3, \Delta]$ as:
\begin{equation}
\label{eq:test_hx}
h(x) = f(1,\Delta) - f(x,\Delta) - y_2 \left( 1 - \frac{1}{x} \right),
\end{equation}
where $y_2 = 2\big(f(1,\Delta) - f(2,\Delta)\big)$. If $h(x) > 0$ for all $x \in [3, \Delta]$, then the tree configuration maximizing the corresponding topological index is uniquely determined.
\end{theorem}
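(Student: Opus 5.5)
The plan is to reduce the statement to Lemma~\ref{lem:general-f-condition}: it suffices to show that $G(i,j)>0$ for every non-basic pair $(i,j)\notin\{(1,\Delta),(2,\Delta)\}$ with $1\le i\le j\le \Delta$. Once this holds, the LP has the unique optimal basis $\{(1,\Delta),(2,\Delta)\}$. Via the decomposition \eqref{eq:general_ti_decomposition}, every tree $T$ then satisfies $TI(T)=(y_1+y_2)n-y_1-\mathcal{P}(T)$ with $\mathcal{P}(T)\ge 0$. Equality holds exactly when $T$ uses only edges of types $(1,\Delta)$ and $(2,\Delta)$. Such trees exist realizing the stated $m_{1,\Delta},m_{2,\Delta}$ (whenever these are integers). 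Since their degree sequence and edge-type multiplicities are then forced, the maximizing configuration is uniquely determined. The content of the proof is therefore verifying positivity of $G$ on all non-basic pairs, and I would split these into cases by $i$.

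\emph{Case $i\ge 3$, $j=\Delta$.} Here $G(x,\Delta)=f(1,\Delta)-f(x,\Delta)-y_2(1-\tfrac1x)=h(x)$, since the $\tfrac1\Delta$ terms cancel. The hypothesis $h>0$ on $[3,\Delta]$ gives $G(i,\Delta)>0$ directly; this includes $(\Delta,\Delta)$.

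\emph{Case $3\le i\le j<\Delta$.} Lemma~\ref{lem:reduction} gives $G(i,j)\ge G(i,\Delta)=h(i)>0$. Some care is needed for $i=j$, since the lemma is stated for $x<y$. I would apply the monotonicity of $G(i,\cdot)$ on $[i,\Delta]$, assuming the extension $f(x,y)$ is available for $x\le y$ with $\partial f/\partial y>0$. Alternatively, I would argue via symmetry and continuity that $G(i,i)\ge G(i,\Delta)$. The strict decrease in $y$ yields even strict inequality $G(i,j)>G(i,\Delta)$.

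\emph{Case $i\in\{1,2\}$, $j<\Delta$.} Monotonicity in $y$ gives $G(1,j)>G(1,\Delta)=0$ and $G(2,j)>G(2,\Delta)=0$. The vanishing values at $j=\Delta$ follow from the binding equations \eqref{eq:dual_eq1}--\eqref{eq:dual_eq2}. This covers $(1,j)$ for $1\le j<\Delta$ and $(2,j)$ for $2\le j<\Delta$.

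The main obstacle is the last case together with the diagonal pairs. These are not literally covered by Lemma~\ref{lem:reduction}, which is phrased for $3\le i<j$. They require invoking the strict monotonicity $\partial G/\partial y<0$ on the whole range $y\in[x,\Delta]$, and hence a differentiable extension valid up to the diagonal. I would state this assumption explicitly, or handle the diagonal by continuity of $f$ at $x=y$. The pair $(1,1)$ is irrelevant for trees with $n\ge \Delta+2$, but it is also covered by the same argument.
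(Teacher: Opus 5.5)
Your proof is correct and follows the paper's route: reduce to Lemma~\ref{lem:general-f-condition}, push each pair to $j=\Delta$ via Lemma~\ref{lem:reduction}, and identify $G(i,\Delta)=h(i)$. It is more complete than the paper's proof, which treats only $3\le i<j\le\Delta$ and skips the pairs $(1,j)$, $(2,j)$ with $j<\Delta$ and all diagonal pairs including $(\Delta,\Delta)$. Your argument using $G(1,\Delta)=G(2,\Delta)=0$ with strict monotonicity closes the first gap. Your explicit continuity assumption at $x=y$ closes the second; it is genuinely needed, since the stated hypotheses constrain $f$ only for $x<y$.

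One small slip in your aside: a tree realizing the basis exists precisely when $n\equiv 1\pmod{\Delta}$, because $m_{2,\Delta}=2n_2$ must be even. Integrality of $m_{1,\Delta}$ and $m_{2,\Delta}$ is not enough; for example, $\Delta=4$, $n=7$ gives $m_{2,4}=1$ and $m_{1,4}=5$, yet no such tree exists.
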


\begin{proof}
According to Lemma \ref{lem:general-f-condition}, the optimal configuration is uniquely determined if the gap function satisfies $G(i,j) > 0$ for all internal nodes with degrees $3 \le i < j \le \Delta$. 

By applying Lemma \ref{lem:reduction}, we can establish a lower bound for $G(i,j)$ by projecting it onto the upper boundary $j = \Delta$:
\begin{equation}
G(i,j) \ge G(i, \Delta) = f(1,\Delta) - f(i,\Delta) - y_2 \left( 1 + \frac{1}{\Delta} - \frac{1}{i} - \frac{1}{\Delta} \right).
\end{equation}
Simplifying the expression by cancelling the $1/\Delta$ terms yields exactly our test function evaluated at $i$:
\begin{equation}
G(i,j) \ge h(i).
\end{equation}
Since, by the premise of the theorem, $h(x) > 0$ for all real values $x \in [3, \Delta]$, it trivially holds for all valid discrete degree values $i \in \{3, \dots, \Delta-1\}$. Consequently, $G(i,j) > 0$, fulfilling the necessary condition and concluding the proof.
\end{proof}

\section{Structural Realization and Applications}
\label{sec:applications}

To bridge the continuous linear programming framework from Section \ref{sec:linear_opt} with discrete graph theory, we must formally characterize the exact parameters under which a tree realizes a basic feasible solution. We establish a universal generative structural class for any bipartite basis parameterized by $\{(1, \Delta), (k, \Delta)\}$. By defining this universally, we simultaneously provide the exact topological construction for the standard basis ($k=2$) utilized in this section, the alternative basis ($k=\Delta$) evaluated in Section \ref{sec:alternative_opt}, and the degree-dependent basis derived in Section \ref{sec:degree_dependent_basis_so11}.

A tree restricted to this bipartite basis contains exclusively leaves ($V_1$), degree-$k$ vertices ($V_k$), and maximum degree vertices ($V_\Delta$). Because every edge must connect to a vertex in $V_\Delta$, the incidence identity \eqref{eq:tree-ni-Delta} simplifies to $m_{1, \Delta} + m_{k, \Delta} = \Delta n_\Delta$. Since the total number of edges is $n - 1$, this directly isolates $n_\Delta$:
\begin{equation}
\label{eq:basis_k_ndelta}
n_\Delta = \frac{n - 1}{\Delta}.
\end{equation}
Similarly, the incidence identities for $V_1$ and $V_k$ evaluate to $m_{1, \Delta} = n_1$ and $m_{k, \Delta} = k n_k$. Summing these equations yields $n_1 + k n_k = n - 1$. Subtracting the vertex sum identity ($n_1 + n_k + n_\Delta = n$) from this edge sum produces the exact linear relation:
\begin{equation}
\label{eq:basis_k_relation}
n_\Delta - (k-1)n_k = 1.
\end{equation}
Substituting Eq. \eqref{eq:basis_k_ndelta} into Eq. \eqref{eq:basis_k_relation} isolates $n_k$:
\begin{equation}
\label{eq:basis_k_nk}
n_k = \frac{n - \Delta - 1}{\Delta(k-1)}.
\end{equation}
For this topology to exist, $n_k$ must be a non-negative integer. This establishes the strict Diophantine congruence:
\begin{equation}
\label{eq:basis_k_modulo}
n \equiv \Delta + 1 \pmod{\Delta(k-1)}.
\end{equation}
By parameterizing the degree-$k$ vertex count as $n_k = q$ for some integer $q \ge 0$, the exact tree order is defined as $n = q\Delta(k-1) + \Delta + 1$. Substituting $q$ and $n$ back into the structural identities yields the complete integer profile:
\begin{align*}
n_\Delta &= q(k-1) + 1, \\
m_{k, \Delta} &= k q, \\
m_{1, \Delta} = n_1 &= q(\Delta k - \Delta - k) + \Delta.
\end{align*}

To formalize the topological existence of graphs matching these parameters, we define the corresponding universal tree class.

\begin{definition}
Let $2 \le k \le \Delta$ and $q \ge 0$ be integers. We define $\mathcal{T}(k, \Delta, q)$ as the family of all trees constructed by the following procedure: Let $H$ be an arbitrary tree with vertex bipartition $V_k \cup V_\Delta$ such that $|V_k| = q$ and $|V_\Delta| = q(k-1) + 1$. Every vertex in $V_k$ must have degree exactly $k$ in $H$. For each vertex $v \in V_\Delta$, attach exactly $\Delta - d_H(v)$ pendant vertices to it, where $d_H(v)$ is its degree in the subtree $H$.
\end{definition}

By construction, all internal edges of a tree in $\mathcal{T}(k, \Delta, q)$ connect $V_k$ and $V_\Delta$, yielding exactly $m_{k, \Delta} = k q$. The total number of attached pendant edges is:
\begin{equation*}
m_{1, \Delta} = \sum_{v \in V_\Delta} \big(\Delta - d_H(v)\big) = \Delta |V_\Delta| - \sum_{v \in V_\Delta} d_H(v).
\end{equation*}
Because all edges in the subtree $H$ connect $V_k$ and $V_\Delta$, the sum of degrees in $V_\Delta$ equals the total number of edges in $H$, which is $kq$. Substituting $|V_\Delta|$ evaluates the exact number of leaves as $\Delta(q(k-1) + 1) - kq = q(\Delta k - \Delta - k) + \Delta$, identically matching the expression for $n_1$.

Conversely, any tree restricted to the basis $\{(1, \Delta), (k, \Delta)\}$ has all internal edges connecting $V_k$ to $V_\Delta$. Pruning all leaves yields a subtree $H$ where the $q$ vertices in $V_k$ retain their exact degree $k$, and the $q(k-1)+1$ vertices in $V_\Delta$ have some degree $d_H(v) \le \Delta$. This confirms the graph was constructed by attaching leaves to $H$, proving that $\mathcal{T}(k, \Delta, q)$ uniquely characterizes all valid tree configurations for $n \equiv \Delta + 1 \pmod{\Delta(k-1)}$.

For the standard basis evaluated in Theorem \ref{thm:main_1D}, we set $k=2$. The Diophantine congruence \eqref{eq:basis_k_modulo} simplifies to $n \equiv \Delta + 1 \pmod{\Delta}$, which is equivalent to $n \equiv 1 \pmod{\Delta}$. An illustrative topological realization of this standard basis is provided in Figure \ref{fig:optimal_standard}.

\begin{figure}[htpb]
    \centering
    \begin{tikzpicture}[
        v2/.style={rectangle, draw=black, fill=gray!30, inner sep=2.5pt, minimum size=6pt},
        v5/.style={circle, draw=black, fill=black, inner sep=1.5pt},
        leaf/.style={circle, draw=black, fill=white, inner sep=1pt},
        edge/.style={thick}
    ]
    
    \node[v2] (K1) at (0, 0) {};
    \node[v2] (K2) at (4, 0) {};
    
    \node[v5] (D1) at (-2, 0) {};
    \node[v5] (D2) at (2, 0) {};
    \node[v5] (D3) at (6, 0) {};
    
    \draw[edge] (K1) -- (D1);
    \draw[edge] (K1) -- (D2);
    \draw[edge] (K2) -- (D2);
    \draw[edge] (K2) -- (D3);
    
    \foreach \angle/\i in {110/1, 150/2, 190/3, 230/4} {
        \node[leaf] (L1\i) at ([shift={(\angle:1.2)}]D1) {};
        \draw (D1) -- (L1\i);
    }
    
    \foreach \angle/\i in {70/1, 90/2, 110/3} {
        \node[leaf] (L2\i) at ([shift={(\angle:1.2)}]D2) {};
        \draw (D2) -- (L2\i);
    }
    
    \foreach \angle/\i in {310/1, 350/2, 30/3, 70/4} {
        \node[leaf] (L3\i) at ([shift={(\angle:1.2)}]D3) {};
        \draw (D3) -- (L3\i);
    }
    \end{tikzpicture}
\caption{An extremal tree belonging to the universal class $\mathcal{T}(2, \Delta, q)$ for the standard basis $\{(1, \Delta), (2, \Delta)\}$. Illustrated is the configuration for $\Delta=5$ and $q=2$, yielding order $n=16$. The two degree-2 vertices (squares) exclusively connect the three maximum degree vertices (solid circles).}
\label{fig:optimal_standard}
\end{figure}

Having established the topological existence of these extremal graphs, we now systematically apply Theorem \ref{thm:main_1D} to maximize several prominent topological indices. Consequently, in the following corollaries, whenever the necessary conditions are met and $n \equiv 1 \pmod{\Delta}$, the optimal tree uniquely maps to the class $\mathcal{T}(2, \Delta, q)$.

\begin{corollary}[Maximum Tangent Index]
Let $T$ be a tree of order $n$ with maximum degree $\Delta \ge 3$. The maximum value of the Tangent index is uniquely achieved by the extremal tree configuration restricted strictly to the edge basis $\{(1, \Delta), (2, \Delta)\}$.
\end{corollary}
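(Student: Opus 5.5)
The plan is to apply Theorem~\ref{thm:main_1D} directly with the weight function $f(x,y)=\frac{y^2-x^2}{2xy}=\frac12\left(\frac{y}{x}-\frac{x}{y}\right)$. I will verify its two hypotheses, compute $y_2$, and show the test function $h$ is positive on $[3,\Delta]$. I will also handle separately the non-basic pairs that Theorem~\ref{thm:main_1D} does not explicitly cover. These are the pairs $(1,j)$ and $(2,j)$ with $j<\Delta$, and the diagonal pairs $(i,i)$.

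\textbf{Hypotheses.} We have $\frac{\partial f}{\partial y}=\frac12\left(\frac1x+\frac{x}{y^2}\right)>0$. Next, $f(1,\Delta)=\frac12\left(\Delta-\frac1\Delta\right)$ and $f(2,\Delta)=\frac12\left(\frac\Delta2-\frac2\Delta\right)$. Hence $f(1,\Delta)-f(2,\Delta)=\frac\Delta4+\frac1{2\Delta}>0$, which gives $y_2=\frac\Delta2+\frac1\Delta$.

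\textbf{The test function.} I compute
\[
f(1,\Delta)-f(x,\Delta)=\frac12\left(\Delta\left(1-\frac1x\right)+\frac{x-1}{\Delta}\right),
\qquad
y_2\left(1-\frac1x\right)=\left(\frac\Delta2+\frac1\Delta\right)\frac{x-1}{x}.
\]
The $\Delta(x-1)/(2x)$ terms cancel, leaving
\[
h(x)=\frac{x-1}{\Delta}\left(\frac12-\frac1x\right)=\frac{(x-1)(x-2)}{2\Delta x},
\]
which is strictly positive for all $x\in[3,\Delta]$. This exact cancellation is the key step. I expect it to be routine but it must be done carefully, since the whole argument rests on it.

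\textbf{Remaining pairs.} The pair $(\Delta,\Delta)$ is handled by $G(\Delta,\Delta)=h(\Delta)>0$. For $i\in\{1,2\}$ and $j<\Delta$, Lemma~\ref{lem:reduction} shows that $G(i,\cdot)$ is strictly decreasing. Therefore $G(i,j)>G(i,\Delta)=0$, the last value being zero because $(i,\Delta)$ is basic.

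For diagonal pairs $(i,i)$ with $i<\Delta$, I use $f(i,i)=0<f(i,\Delta)$ together with $\frac1i>\frac1\Delta$. Together these give
\[
G(i,i)-G(i,\Delta)=f(i,\Delta)-f(i,i)+y_2\left(\frac1i-\frac1\Delta\right)>0 .
\]
Hence $G(i,i)>G(i,\Delta)$. The right-hand side is $0$ when $i\le2$ and equals $h(i)>0$ when $i\ge3$.

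\textbf{Conclusion.} Every non-basic gap is therefore strictly positive. Lemma~\ref{lem:general-f-condition} then yields the unique optimal basis $\{(1,\Delta),(2,\Delta)\}$, realized by the class $\mathcal{T}(2,\Delta,q)$ when $n\equiv1\pmod\Delta$.
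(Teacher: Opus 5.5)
Your proposal is correct and follows the paper's route. You verify the hypotheses of Theorem~\ref{thm:main_1D}, obtain $y_2=\frac{\Delta^2+2}{2\Delta}$ and $h(x)=\frac{(x-1)(x-2)}{2\Delta x}>0$ on $[3,\Delta]$, and conclude via Lemma~\ref{lem:general-f-condition}. Your separate treatment of $(1,j)$ and $(2,j)$ with $j<\Delta$, and of the diagonal pairs (including $G(\Delta,\Delta)=h(\Delta)$), makes the argument more rigorous than the paper's, whose proof of Theorem~\ref{thm:main_1D} only checks $3\le i<j\le\Delta$.
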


\begin{proof}
For the Tangent index, the weight function under the convention $x < y$ is defined as $f(x,y) = \frac{y^2 - x^2}{2xy} = \frac{y}{2x} - \frac{x}{2y}$. We first verify the monotonicity condition of Theorem \ref{thm:main_1D}:
$$
\frac{\partial f}{\partial y} = \frac{1}{2x} + \frac{x}{2y^2} > 0.
$$
Next, we determine the penalty coefficient $y_2$ and simultaneously verify the second requirement $f(1, \Delta) > f(2, \Delta)$:
\begin{equation*}
y_2 = 2\big(f(1,\Delta) - f(2,\Delta)\big) = 2 \left( \frac{\Delta^2 - 1}{2\Delta} - \frac{\Delta^2 - 4}{4\Delta} \right) = \frac{\Delta^2 + 2}{2\Delta}.
\end{equation*}
Since $\Delta \ge 3$, it strictly follows that $y_2 > 0$, thereby confirming $f(1, \Delta) > f(2, \Delta)$. With both conditions of Theorem \ref{thm:main_1D} fulfilled, the optimality reduces to evaluating the single-variable test function defined in \eqref{eq:test_hx}.

Substituting $f(x, \Delta)$ and $y_2$ back into \eqref{eq:test_hx}, we obtain the expression for $h(x)$:
\begin{equation}
h(x) = \frac{\Delta^2 - 1}{2\Delta} - \frac{\Delta^2 - x^2}{2\Delta x} - \frac{\Delta^2 + 2}{2\Delta} \left( \frac{x-1}{x} \right).
\end{equation}
Simplifying this expression algebraically yields a concise closed-form equation:
\begin{equation}
h(x) = \frac{(x-1)(x-2)}{2\Delta x}.
\end{equation}
Taking the first derivative yields $h'(x) = \frac{1}{2\Delta}\left(1 - \frac{2}{x^2}\right)$. For all $x \ge 3$, it holds that $x^2 > 2$, which implies $h'(x) > 0$. Since $h(1)=0$ and $h(2)=0$, the function is strictly positive for all $x \in [3, \Delta]$. According to Theorem \ref{thm:main_1D}, this completes the proof.
\end{proof}

\begin{corollary}[$SO_1$ Index]
\label{cor:so1}
Let $T$ be a tree of order $n$ with maximum degree $\Delta \ge 3$. The maximum value of the $SO_1$ index is uniquely achieved by the extremal tree configuration restricted strictly to the edge basis $\{(1, \Delta), (2, \Delta)\}$.
\end{corollary}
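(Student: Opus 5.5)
We follow the same route as for the Tangent index and apply Theorem \ref{thm:main_1D} directly. For $SO_1$, under the convention $x<y$, the weight function is $f(x,y)=area(AOB)=\frac{1}{2}(y^2-x^2)$. We need three things: the monotonicity hypothesis, the sign condition $f(1,\Delta)>f(2,\Delta)$, and positivity of the test function $h(x)$ on $[3,\Delta]$.

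First, $\frac{\partial f}{\partial y}=y>0$ on the whole domain, so monotonicity holds. Next,
\[
f(1,\Delta)-f(2,\Delta)=\tfrac{1}{2}(\Delta^2-1)-\tfrac{1}{2}(\Delta^2-4)=\tfrac{3}{2}.
\]
This is strictly positive and gives $y_2=3$, independent of $\Delta$.

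Substituting into \eqref{eq:test_hx} gives
\[
h(x)=\tfrac{1}{2}(\Delta^2-1)-\tfrac{1}{2}(\Delta^2-x^2)-3\Bigl(1-\frac{1}{x}\Bigr)=\frac{x^2-1}{2}-\frac{3(x-1)}{x}.
\]
Factoring out $x-1$ yields
\[
h(x)=\frac{(x-1)\bigl(x(x+1)-6\bigr)}{2x}=\frac{(x-1)(x-2)(x+3)}{2x}.
\]
This vanishes exactly at $x=1$ and $x=2$, matching the basic edges $(1,\Delta)$ and $(2,\Delta)$, as expected. For $x\in[3,\Delta]$ every factor is strictly positive, so $h(x)>0$ there.

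The only step needing care is that Theorem \ref{thm:main_1D} reduces via Lemma \ref{lem:reduction} only to pairs $3\le i<j\le\Delta$. The remaining non-basic pairs must be checked directly with \eqref{eq:general-gap-function}:
\begin{itemize}
\item For $(1,j)$ and $(2,j)$ with $j<\Delta$: $G$ is strictly decreasing in $j$, and $G(1,\Delta)=G(2,\Delta)=0$ by construction, so $G>0$.
\item For $(2,2)$: direct computation gives $G(2,2)=\tfrac{1}{2}(\Delta^2-1)-3\cdot\tfrac{1}{\Delta}>0$ for $\Delta\ge3$.
\item For diagonal pairs $(i,i)$ with $i\ge3$, including $(\Delta,\Delta)$: here $f(i,i)=0$, so $G(i,i)=\tfrac12(\Delta^2-1)-3\bigl(1+\frac1\Delta-\frac2i\bigr)$. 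This is at least $\tfrac12(\Delta^2-1)-3>0$ for $\Delta\ge3$.
\end{itemize}
Hence $G>0$ on all non-basic pairs, and Lemma \ref{lem:general-f-condition} gives uniqueness of the optimal basis. For $n\equiv1\pmod{\Delta}$ the optimum is realized by the class $\mathcal{T}(2,\Delta,q)$.
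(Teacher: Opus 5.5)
Your proposal is correct and follows the paper's proof: the same weight $f(x,y)=\frac12(y^2-x^2)$, the same $y_2=3$, the factorization $h(x)=\frac{(x-1)(x-2)(x+3)}{2x}$, and the same conclusion via Theorem~\ref{thm:main_1D}. Your direct checks of the pairs $(1,j)$, $(2,j)$ and the diagonal pairs $(i,i)$ are a worthwhile addition (the first bullet also covers the LP variable $m_{1,1}$ if you let $j$ start at $1$), since the paper's proof of Theorem~\ref{thm:main_1D} only treats $3\le i<j\le\Delta$ and leaves these cases implicit.
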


\begin{proof}
The topological weight function characterizing the $SO_1$ index is given by $f(x,y) = \frac{1}{2}(y^2 - x^2)$ for $x < y$. We first establish the monotonicity prerequisite of Theorem \ref{thm:main_1D} by verifying that $\frac{\partial f}{\partial y} = y > 0$.

Next, we evaluate the penalty coefficient $y_2$ to verify the condition $f(1, \Delta) > f(2, \Delta)$:
\begin{equation*}
y_2 = 2\big(f(1,\Delta) - f(2,\Delta)\big) = (\Delta^2 - 1) - (\Delta^2 - 4) = 3.
\end{equation*}
Since $y_2 = 3 > 0$, the initial conditions of Theorem \ref{thm:main_1D} are fully satisfied. Substituting $f(x, \Delta)$ and $y_2$ into the 1D test function \eqref{eq:test_hx}, we obtain:
\begin{equation}
h(x) = \frac{\Delta^2 - 1}{2} - \frac{\Delta^2 - x^2}{2} - 3 \left( \frac{x-1}{x} \right) = \frac{x^2 - 1}{2} - \frac{3(x-1)}{x}.
\end{equation}
By factoring out $(x-1)$, this algebraic expression simplifies directly into a concise closed-form equation:
\begin{equation}
h(x) = \frac{(x-1)(x-2)(x+3)}{2x}.
\end{equation}
Since all factors of $h(x)$ are strictly positive for $x \ge 3$, we have $h(x) > 0$ for all $x \in [3, \Delta]$. By Theorem \ref{thm:main_1D}, the extremal tree uniquely maximizes the $SO_{11}$ index.
\end{proof}

While $SO_1$ evaluates geometric area, the $SO_2$ index evaluates the angular metric of the geometric representation. We demonstrate that the identical standard basis remains uniquely optimal.

\begin{corollary}[$SO_2$ Index]
\label{cor:so2}
Let $T$ be a tree of order $n$ with maximum degree $\Delta \ge 3$. The maximum value of the $SO_2$ index is uniquely achieved by the extremal tree configuration restricted strictly to the edge basis $\{(1, \Delta), (2, \Delta)\}$.
\end{corollary}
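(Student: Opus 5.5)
The plan is to apply Theorem~\ref{thm:main_1D} directly, exactly as in Corollary~\ref{cor:so1}. For $SO_2$ the weight function is $f(x,y)=\sin\alpha=\frac{y^2-x^2}{x^2+y^2}$ for $x<y$. The monotonicity hypothesis is immediate: $\frac{\partial f}{\partial y}=\frac{4x^2y}{(x^2+y^2)^2}>0$.

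Next I compute the penalty coefficient. Since
$$
f(1,\Delta)=\frac{\Delta^2-1}{\Delta^2+1}, \qquad f(2,\Delta)=\frac{\Delta^2-4}{\Delta^2+4},
$$
cross-multiplying the difference makes the quartic terms cancel and gives
$$
f(1,\Delta)-f(2,\Delta)=\frac{6\Delta^2}{(\Delta^2+1)(\Delta^2+4)}>0.
$$
Hence $y_2=\frac{12\Delta^2}{(\Delta^2+1)(\Delta^2+4)}$, and both preliminary hypotheses of Theorem~\ref{thm:main_1D} hold.

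For the test function \eqref{eq:test_hx}, it is convenient to write $f(x,\Delta)=1-\frac{2x^2}{x^2+\Delta^2}$. This gives
$$
f(1,\Delta)-f(x,\Delta)=\frac{2\Delta^2(x^2-1)}{(x^2+\Delta^2)(\Delta^2+1)}.
$$
Factoring out the common positive quantity $\frac{2\Delta^2(x-1)}{\Delta^2+1}$, I expect to reach
$$
h(x)=\frac{2\Delta^2(x-1)}{\Delta^2+1}\left[\frac{x+1}{x^2+\Delta^2}-\frac{6}{x(\Delta^2+4)}\right].
$$
The sign of the bracket is therefore the sign of $N(x)=x(x+1)(\Delta^2+4)-6(x^2+\Delta^2)$.

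The only real step is showing $N(x)>0$ on $[3,\Delta]$. Expanding and regrouping, I expect the factorization
$$
N(x)=\Delta^2(x^2+x-6)-2x^2+4x=(x-2)\bigl(\Delta^2(x+3)-2x\bigr).
$$
For $x\ge 3$ we have $x-2>0$, and $\Delta^2(x+3)\ge 9(x+3)>2x$, so $N(x)>0$. Together with $x-1>0$ this gives $h(x)>0$ on $[3,\Delta]$. Theorem~\ref{thm:main_1D} then yields the claimed uniqueness of the standard basis $\{(1,\Delta),(2,\Delta)\}$.

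I expect the main obstacle to be only bookkeeping in this rational-function simplification. The roots of $h$ at $x=1$ and $x=2$, forced by the binding dual constraints, guide the factorization, so I would look for the factors $(x-1)$ and $(x-2)$ explicitly.
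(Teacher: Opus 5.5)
Your proposal is correct and follows the paper's proof essentially step by step: the same application of Theorem~\ref{thm:main_1D}, the same $y_2=\frac{12\Delta^2}{(\Delta^2+1)(\Delta^2+4)}$, the same factored form of $h(x)$, and the same numerator $\mathcal{N}=(x^2+x-6)\Delta^2-2x^2+4x$. The only difference is the last step: you factor $\mathcal{N}=(x-2)\bigl((x+3)\Delta^2-2x\bigr)$ and use $\Delta^2\ge 9$, whereas the paper substitutes $\Delta^2\ge x^2$ and factors the resulting quartic as $x(x-2)(x^2+3x-2)$; both are valid, and yours is slightly more direct.
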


\begin{proof}
The topological weight function characterizing the $SO_2$ index is given by $f(x,y) = \frac{y^2 - x^2}{x^2 + y^2}$ for $x < y$. We first establish the monotonicity prerequisite of Theorem \ref{thm:main_1D} by evaluating the partial derivative with respect to $y$:
\begin{equation*}
\frac{\partial f}{\partial y} = \frac{2y(x^2 + y^2) - 2y(y^2 - x^2)}{(x^2 + y^2)^2} = \frac{4x^2y}{(x^2 + y^2)^2}.
\end{equation*}
Since $x, y \ge 1$, it strictly follows that $\frac{\partial f}{\partial y} > 0$. 

Next, we evaluate the penalty coefficient $y_2$ to confirm the condition $f(1, \Delta) > f(2, \Delta)$:
\begin{align*}
y_2 &= 2\left( \frac{\Delta^2 - 1}{\Delta^2 + 1} - \frac{\Delta^2 - 4}{\Delta^2 + 4} \right) \\
&= 2 \left( \frac{(\Delta^4 + 3\Delta^2 - 4) - (\Delta^4 - 3\Delta^2 - 4)}{(\Delta^2 + 1)(\Delta^2 + 4)} \right) = \frac{12\Delta^2}{(\Delta^2 + 1)(\Delta^2 + 4)}.
\end{align*}
Given $\Delta \ge 3$, $y_2 > 0$, completely satisfying the initial conditions of Theorem \ref{thm:main_1D}. Substituting $f(x, \Delta)$ and $y_2$ into the 1D test function \eqref{eq:test_hx}, we obtain:
\begin{equation}
h(x) = \left( \frac{\Delta^2 - 1}{\Delta^2 + 1} - \frac{\Delta^2 - x^2}{\Delta^2 + x^2} \right) - \frac{12\Delta^2}{(\Delta^2 + 1)(\Delta^2 + 4)} \left( \frac{x-1}{x} \right).
\end{equation}
Algebraically simplifying the first parenthetical term yields $\frac{2\Delta^2(x^2 - 1)}{(\Delta^2 + 1)(\Delta^2 + x^2)}$. By factoring out the common multiplier $\frac{2\Delta^2(x-1)}{\Delta^2 + 1}$, the function can be compactly rewritten as:
\begin{equation}
h(x) = \frac{2\Delta^2(x-1)}{\Delta^2 + 1} \left[ \frac{x+1}{\Delta^2 + x^2} - \frac{6}{x(\Delta^2 + 4)} \right].
\end{equation}
For $h(x) > 0$, we must show that the bracketed expression is positive, which is algebraically equivalent to proving that the cross-multiplied numerator $\mathcal{N}(x, \Delta) = x(x+1)(\Delta^2 + 4) - 6(\Delta^2 + x^2) > 0$. Expanding and regrouping this polynomial gives:
\begin{equation}
\mathcal{N}(x, \Delta) = (x^2 + x - 6)\Delta^2 - 2x^2 + 4x.
\end{equation}
For all valid interior degrees $x \ge 3$, the quadratic factor $(x^2 + x - 6) = (x-2)(x+3)$ is strictly positive. Since the maximum degree satisfies $\Delta \ge x$, we can establish a strict lower bound by substituting $\Delta^2 \ge x^2$:
\begin{equation}
\mathcal{N}(x, \Delta) \ge x^2(x^2 + x - 6) - 2x^2 + 4x = x^4 + x^3 - 8x^2 + 4x.
\end{equation}
This lower bound polynomial factors elegantly into $x(x-2)(x^2 + 3x - 2)$. For $x \ge 3$, every individual factor is strictly positive, definitively ensuring $\mathcal{N}(x, \Delta) > 0$. Consequently, $h(x) > 0$ for all $x \in [3, \Delta]$. By Theorem \ref{thm:main_1D}, the extremal tree configuration uniquely maximizes the $SO_2$ index.
\end{proof}

\begin{corollary}[Arithmetic-Geometric Index]
\label{cor:ag}
Let $T$ be a tree of order $n$ with maximum degree $\Delta \ge 9$. The maximum value of the Arithmetic-Geometric ($AG$) index is uniquely achieved by the extremal tree configuration restricted strictly to the edge basis $\{(1, \Delta), (2, \Delta)\}$.
\end{corollary}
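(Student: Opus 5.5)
We apply Theorem~\ref{thm:main_1D} to the weight function $f(x,y)=\frac{x+y}{2\sqrt{xy}}$ with $x<y$. The plan has three steps: check the two hypotheses, rewrite $h$ as a cubic in $\sqrt{x}$ with two known roots, and locate the third root, which is where the threshold $\Delta\ge 9$ comes from.

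\textbf{Hypotheses.} For monotonicity, a direct computation gives
\[
\frac{\partial f}{\partial y}=\frac{y-x}{4y\sqrt{xy}},
\]
which is strictly positive for $x<y$. For the penalty coefficient,
\[
y_2=2\bigl(f(1,\Delta)-f(2,\Delta)\bigr)=\frac{1+\Delta}{\sqrt{\Delta}}-\frac{2+\Delta}{\sqrt{2\Delta}}=\frac{\sqrt2(1+\Delta)-(2+\Delta)}{\sqrt{2\Delta}}.
\]
Its numerator equals $(\sqrt2-1)\Delta+\sqrt2-2$, which is positive for $\Delta\ge 3$. Hence $f(1,\Delta)>f(2,\Delta)$, and it remains to show $h(x)>0$ on $[3,\Delta]$.

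\textbf{Cubic in $u=\sqrt{x}$.} Set $s=\sqrt{\Delta}$, $u=\sqrt{x}$ and $c_0=f(1,\Delta)=\frac{1+s^2}{2s}$. Then
\[
h=c_0-\frac{u}{2s}-\frac{s}{2u}-y_2+\frac{y_2}{u^2}.
\]
Multiplying by $u^2>0$ gives the cubic
\[
p(u)=-\frac{u^3}{2s}+(c_0-y_2)u^2-\frac{s}{2}u+y_2 .
\]
By construction of $y_2$ we have $h(1)=h(2)=0$, so $u=1$ and $u=\sqrt2$ are roots of $p$. Since the leading coefficient is $-\frac{1}{2s}$, we can write
\[
p(u)=-\frac{1}{2s}(u-1)(u-\sqrt2)(u-r).
\]
Matching constant terms, $\frac{\sqrt2\,r}{2s}=y_2$, so the third root is $r=\sqrt2\,s\,y_2$. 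For $u\in[\sqrt3,s]$ the factors $(u-1)$ and $(u-\sqrt2)$ are positive. Therefore $p(u)>0$ on this interval exactly when $u<r$ for all such $u$, that is, when $r>s$. This is equivalent to $\sqrt2\,y_2>1$.

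\textbf{The threshold.} Substituting the formula for $y_2$, the condition $\sqrt2\,y_2>1$ becomes
\[
(\sqrt2-1)\Delta+\sqrt2-2>\sqrt{\Delta}.
\]
The left side minus the right side, viewed as a function of $t=\sqrt{\Delta}$, is an upward quadratic, and it is increasing for $t\ge 3$. At $\Delta=9$ the left side is about $3.14>3$, so the inequality holds for all $\Delta\ge 9$. At $\Delta=8$ the left side is about $2.73<2\sqrt2$, so it fails there, which shows the threshold in the statement is sharp.

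Once $h>0$ on $[3,\Delta]$ is established, Theorem~\ref{thm:main_1D} gives the uniqueness claimed in the corollary. The main obstacle is the factorization step: one must verify that $p$ really has the form above, which follows from the vanishing $h(1)=h(2)=0$ that is built into the definition of $y_2$. One must also confirm that the boundary point $x=\Delta$, although not an admissible discrete degree under $x<y$, causes no trouble; since $h(\Delta)>0$ whenever $r>s$, it does not.
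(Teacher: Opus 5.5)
Your proof is correct, and it takes a genuinely different route from the paper's. The paper multiplies $h$ by $2x\sqrt{\Delta}$ and treats the numerator $\mathcal{N}(x,\Delta)$ as linear in $\Delta$. It proves that the slope $(\sqrt2-1)x-\sqrt{x}+2-\sqrt2$ is positive for $x\ge 3$, uses this to bound $\mathcal{N}(x,\Delta)\ge\mathcal{N}(x,9)$, and then studies the cubic $P(t)=\mathcal{N}(t^2,9)$ on $[\sqrt3,3]$ through its critical points and endpoint values.

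You keep $\Delta$ as a free parameter and use the roots $u=1,\sqrt2$ that the framework forces through $h(1)=h(2)=0$. You then read off the third root $r=\sqrt2\sqrt{\Delta}\,y_2=(\sqrt2-1)(\Delta-\sqrt2)$, so positivity on the whole interval reduces to the single inequality $r>\sqrt{\Delta}$. This is the same factor-out-$(x-1)(x-2)$ mechanism the paper uses for the Tangent, $SO_1$, $\sigma$ and Albertson indices, here carried out in the variable $\sqrt{x}$.

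Your route is more than neater; it is more complete. The paper's last step only examines $x\le 9$. That settles $\Delta=9$, but for $\Delta\ge 10$ the variable $x$ ranges up to $\Delta$, and $P$ turns negative just past $t=3$ (its root is near $t\approx 3.14$, and $P(4)\approx-6.7$). So the bound $\mathcal{N}(x,\Delta)\ge\mathcal{N}(x,9)$ proves nothing on $(9,\Delta]$. Your factorization covers every $x\in[3,\Delta]$ and every $\Delta\ge 9$ at once, and it locates the exact threshold $\Delta\approx 8.42$. The paper's monotonicity-in-$\Delta$ device has the advantage of applying to weights without such a clean factorization. As written, though, it needs an extra case to be complete, for instance bounding by $\mathcal{N}(x,x)$ when $x\ge 9$.

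One small correction to your closing remark: $x=\Delta$ is not a spurious endpoint. $h(\Delta)=G(\Delta,\Delta)$ is the gap of the genuine non-basic edge type $(\Delta,\Delta)$. For $3\le\Delta\le 8$ your factorization gives $r<\sqrt{\Delta}$, hence $h(\Delta)<0$. That is exactly why the $AG$ optimum moves to the basis $\{(1,\Delta),(\Delta,\Delta)\}$ in Section~\ref{sec:alternative_opt}. For $\Delta\ge 9$ you do prove $h(\Delta)>0$, so your conclusion stands.
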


\begin{proof}
The weight function for the $AG$ index under the convention $x < y$ is defined as $f(x,y) = \frac{x+y}{2\sqrt{xy}}$. We first verify the monotonicity condition of Theorem \ref{thm:main_1D} by taking the partial derivative with respect to $y$:
\begin{equation*}
\frac{\partial f}{\partial y} = \frac{y-x}{4\sqrt{x}y^{3/2}}.
\end{equation*}
Since $y > x \ge 1$, it strictly follows that $\frac{\partial f}{\partial y} > 0$. 

Next, we evaluate the penalty coefficient $y_2$ to confirm the condition $f(1, \Delta) > f(2, \Delta)$:
\begin{equation*}
y_2 = 2\big(f(1,\Delta) - f(2,\Delta)\big) = 2 \left( \frac{\Delta + 1}{2\sqrt{\Delta}} - \frac{\Delta + 2}{2\sqrt{2\Delta}} \right) = \frac{\sqrt{2}(\Delta + 1) - (\Delta + 2)}{\sqrt{2\Delta}}.
\end{equation*}
To guarantee $y_2 > 0$, the linear numerator of this expression must be strictly positive. Rearranging the terms into a linear function of $\Delta$ yields:
\begin{equation*}
(\sqrt{2}-1)\Delta - (2-\sqrt{2}) > 0 \implies \Delta > \frac{2-\sqrt{2}}{\sqrt{2}-1}  = \sqrt{2}.
\end{equation*}
Since our maximum degree satisfies $\Delta \ge 9 > \sqrt{2}$, this condition is trivially fulfilled. With the initial conditions of Theorem \ref{thm:main_1D} established, we construct the single-variable test function $h(x)$ from \eqref{eq:test_hx}:
\begin{equation}
h(x) = \frac{\Delta + 1}{2\sqrt{\Delta}} - \frac{x + \Delta}{2\sqrt{x\Delta}} - \frac{(\sqrt{2}-1)\Delta - (2-\sqrt{2})}{\sqrt{2\Delta}} \left( \frac{x-1}{x} \right).
\end{equation}
By factoring out the common positive denominator $2x\sqrt{\Delta}$, we can rewrite the test function as $h(x) = \frac{\mathcal{N}(x, \Delta)}{2x\sqrt{\Delta}}$, where the algebraic numerator is:
\begin{equation}
\mathcal{N}(x, \Delta) = x(\Delta+1) - \sqrt{x}(x+\Delta) - \sqrt{2}(x-1)\big[(\sqrt{2}-1)\Delta - (2-\sqrt{2})\big].
\end{equation}
To prove $h(x) > 0$, it is sufficient to prove $\mathcal{N}(x, \Delta) > 0$. Expanding and grouping the terms into a linear function of the parameter $\Delta$, we obtain:
\begin{equation}
\mathcal{N}(x, \Delta) = \Delta \big[ (\sqrt{2}-1)x - \sqrt{x} + 2 - \sqrt{2} \big] - x\sqrt{x} + (2\sqrt{2}-1)x - (2\sqrt{2}-2).
\end{equation}
Let $C_1(x) = (\sqrt{2}-1)x - \sqrt{x} + 2 - \sqrt{2}$ represent the slope of $\mathcal{N}(x, \Delta)$ with respect to $\Delta$. By treating $C_1(x)$ as a quadratic function of the variable $t = \sqrt{x}$, we calculate its discriminant:
\begin{equation}
D = (-1)^2 - 4(\sqrt{2}-1)(2-\sqrt{2}) = 17 - 12\sqrt{2} = (3-2\sqrt{2})^2.
\end{equation}
The roots of this quadratic are exactly $t_1 = 1$ and $t_2 = \sqrt{2}$. With a positive leading coefficient, the parabola is strictly positive for all $t > \sqrt{2}$. Since valid internal degrees require $x \ge 3$, we have $t = \sqrt{x} \ge \sqrt{3} > \sqrt{2}$, definitively proving $C_1(x) > 0$. 

Because the slope is positive, $\mathcal{N}(x, \Delta)$ is a strictly increasing function of $\Delta$. Therefore, for any valid tree with $\Delta \ge 9$, the numerator is bounded below by its evaluation at the minimum boundary $\Delta = 9$:
\begin{equation}
\mathcal{N}(x, \Delta) \ge \mathcal{N}(x, 9) = -x\sqrt{x} + (11\sqrt{2}-10)x - 9\sqrt{x} + (20-11\sqrt{2}).
\end{equation}
Introducing the substitution $t = \sqrt{x}$, the minorant becomes a cubic polynomial $P(t) = -t^3 + (11\sqrt{2}-10)t^2 - 9t + (20-11\sqrt{2})$. Since $x \in [3, 9]$, we analyze $P(t)$ over the domain $t \in [\sqrt{3}, 3]$. Taking the first derivative yields $P'(t) = -3t^2 + 2(11\sqrt{2}-10)t - 9$. The roots of $P'(t) = 0$ dictate the local extrema. Solving this quadratic equation reveals a local minimum at $t_1 \approx 1.196$ (which lies outside our domain) and a local maximum at $t_2 \approx 2.508$ (which lies inside our domain). 

This implies that on the interval $[\sqrt{3}, 3]$, the function $P(t)$ monotonically increases to its local maximum at $t_2$ and then monotonically decreases. Due to this, the absolute minimum of $P(t)$ on this interval must occur at one of the boundary points. Evaluating $P(t)$ at these endpoints gives exactly:
\begin{align}
P(\sqrt{3}) &= 22\sqrt{2} - 12\sqrt{3} - 10 \approx 0.328 > 0, \\
P(3) &= 88\sqrt{2} - 124 \approx 0.451 > 0.
\end{align}
Since the polynomial evaluates to strictly positive values at both bounds, $P(t) > 0$ for the entire interior of the interval. Consequently, $\mathcal{N}(x, \Delta) > 0$, which ensures $h(x) > 0$ for all valid $x \in [3, \Delta]$. According to Theorem \ref{thm:main_1D}, the extremal tree configuration uniquely maximizes the Arithmetic-Geometric index.
\end{proof}

\begin{remark}
The bound $\Delta \ge 9$ in Corollary \ref{cor:ag} is sharp. For instance, if $\Delta = 8$, evaluating the numerator $\mathcal{N}(x, 8)$ at the upper bound $x = 8$ yields:
\begin{equation*}
\mathcal{N}(8, 8) = -16\sqrt{2} + 8(10\sqrt{2}-9) - 16\sqrt{2} + (18-10\sqrt{2}) = 38\sqrt{2} - 54.
\end{equation*}
Since $(38\sqrt{2})^2 = 2888 < 54^2 = 2916$, it follows that $38\sqrt{2} < 54$, implying $\mathcal{N}(8, 8) < 0$. Consequently, $h(x)$ fails to remain globally positive on $[3, 8]$, confirming that Theorem \ref{thm:main_1D} is not directly applicable for $\Delta \le 8$.
\end{remark}

For the Triangle Volume index ($SO_{11}$), the weight function $f(x,y) = y^3 + y^2x - x^2y - x^3$ fails to satisfy the preconditions of Theorem \ref{thm:main_1D}. Specifically, the difference $f(1, \Delta) - f(2, \Delta) = -\Delta^2 + 3\Delta + 7$ is strictly positive only for $\Delta \in \{3, 4\}$. For $\Delta \ge 5$, this difference is strictly negative, invalidating the dimensionality reduction. Therefore, to prove the optimality of the extremal tree configuration for the $SO_{11}$ index on the interval $3 \le \Delta \le 8$, we bypass Theorem \ref{thm:main_1D} entirely and directly apply the general optimality criterion formulated in Lemma \ref{lem:general-f-condition}.

\begin{lemma}[Positivity of the Gap Function for $SO_{11}$]
\label{lem:so11_gap}
Let $G(x,y)$ be the continuous extension of the gap function for the $SO_{11}$ index:
\begin{equation*}
G(x,y) = f(1, \Delta) - f(x,y) - y_2(\Delta) \left( 1 + \frac{1}{\Delta} - \frac{1}{x} - \frac{1}{y} \right),
\end{equation*}
where $f(x,y) = y^3 + y^2 x - x^2 y - x^3$ and $y_2(\Delta) = 2(-\Delta^2 + 3\Delta + 7)$. For any integer maximum degree $3 \le \Delta \le 8$, the strict inequality $G(x,y) > 0$ holds for all valid integer pairs $1 \le x < y \le \Delta$, excluding the basic variables $(1, \Delta)$ and $(2, \Delta)$.
\end{lemma}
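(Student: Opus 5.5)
The statement concerns only six values of $\Delta$, and for each of them only finitely many integer pairs $(x,y)$. So the proof is in principle a finite verification. I would nevertheless organize it so that most pairs are dismissed by sign arguments, and only a small residual list needs explicit evaluation. I would also add the diagonal pairs $(i,i)$, since they are non-basic LP variables; these are easy because $f(x,x)=0$.

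I would begin by recording the closed forms
\[ f(1,\Delta)=(\Delta-1)(\Delta+1)^2,\qquad \frac{\partial f}{\partial y}=3y^2+2xy-x^2>0,\qquad \frac{\partial f}{\partial x}=(y-3x)(y+x). \]
I would also set $w(x,y)=1+\frac1\Delta-\frac1x-\frac1y$, so that $G=f(1,\Delta)-f(x,y)-y_2w$. The sign of $y_2$ splits the work into two regimes: $y_2>0$ for $\Delta\in\{3,4\}$, and $y_2<0$ for $5\le\Delta\le 8$ (for instance $y_2=-6$ at $\Delta=5$ and $y_2=-66$ at $\Delta=8$).

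I would then split by the smaller degree.

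\emph{Case $x=1$, $y<\Delta$.} Here $w=\frac1\Delta-\frac1y<0$, and $f(1,\Delta)-f(1,y)>0$ by monotonicity in $y$. So $G>0$ is automatic when $y_2<0$. When $y_2>0$ (only $\Delta=3,4$), there are at most three pairs, which I would check directly.

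\emph{Case $x\ge 2$.} Here $w\ge 0$; it equals $\frac1\Delta>0$ in the worst subcase $x=y=2$, and it vanishes only at the basic pair $(2,\Delta)$. For $5\le\Delta\le 8$ this gives $G\ge f(1,\Delta)-f(x,y)\ge f(1,\Delta)-f(x,\Delta)$, using that $f$ is increasing in $y$. Now $f(\cdot,\Delta)$ increases on $[1,\Delta/3]$ and decreases afterwards. Hence $f(x,\Delta)<f(1,\Delta)$ holds for all $x$ beyond a threshold slightly above $\Delta/3$. For $\Delta\le 8$, the only potentially dangerous integers are $x=2$, and perhaps $x=3$ when $\Delta\in\{7,8\}$.

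For those few pairs, and in particular along the column $y=\Delta$, where $f(x,\Delta)$ can exceed $f(1,\Delta)$, the crude bound $G\ge f(1,\Delta)-f(x,y)$ fails. There I would keep the term $-y_2w=|y_2|w$ and evaluate $G$ exactly. For example, at $(3,\Delta)$ one has $w=\frac23-\frac12\cdot 0$, precisely $w=\frac23-\frac12+\dots$, which I would compute and compare numerically against $f(3,\Delta)-f(1,\Delta)$. The identity $y_2=2(f(1,\Delta)-f(2,\Delta))$ lets me rewrite $G(x,\Delta)$ as a polynomial in $\Delta$ divided by $x$, which makes these comparisons mechanical. Finally, for $\Delta\in\{3,4\}$ with $x\ge 2$, there are only a handful of pairs, and I would tabulate $G$ outright.

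\textbf{Main obstacle.} I expect the hardest step to be the boundary column $y=\Delta$ with $x=3$ at $\Delta=8$. There $f(x,\Delta)-f(1,\Delta)$ and $|y_2|w$ are of comparable size, so positivity holds only by a small margin. This margin is also what breaks at $\Delta=9$, consistent with the paper's threshold. I would settle it by exact rational arithmetic, confirming that $G(3,8)>0$, rather than by any estimate. I would also double-check $G(3,7)$ and $G(2,y)$ for $y<\Delta$ in the same way.
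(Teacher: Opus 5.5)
Your plan of a finite verification organized by the signs of $y_2$ and $w$ is workable, but two of the sign-based dismissals fail as written.

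First, for $x=1$ and $y<\Delta$ you have $w=\frac1\Delta-\frac1y<0$. So the term $-y_2w$ has the \emph{same} sign as $y_2$: it helps for $\Delta\in\{3,4\}$ and \emph{hurts} for $5\le\Delta\le 8$. You have the two regimes reversed. Positivity is automatic when $y_2>0$, and an estimate is needed exactly when $y_2<0$. Such an estimate is easy to give. For example, $f(1,\Delta)-f(1,y)\ge f(1,\Delta)-f(1,\Delta-1)=3\Delta^2-\Delta-1\ge 69$, while $|y_2|\,|w|<66\cdot\frac12=33$. Alternatively, use the paper's factorization $G(1,y)=(\Delta-y)\bigl[\Delta^2+\Delta y+y^2+\Delta+y-1+\frac{y_2}{y\Delta}\bigr]$ and bound the bracket from below. (Relatedly, $w$ does not vanish at $(2,\Delta)$: in fact $w(2,\Delta)=\frac12$, $w(3,\Delta)=\frac23$, and $w=0$ only at $(1,\Delta)$. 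This slip is harmless, since you only use $w>0$ for $x\ge 2$.)

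Second, unimodality of $f(\cdot,\Delta)$ does not place the crossing point ``slightly above $\Delta/3$''. From $f(x,\Delta)-f(1,\Delta)=(x-1)\bigl[\Delta^2-(x+1)\Delta-(x^2+x+1)\bigr]$, the crossing is at $x_0=\frac{\sqrt{(5\Delta+3)(\Delta-1)}-(\Delta+1)}{2}$. This is asymptotically $\frac{\sqrt5-1}{2}\Delta$, and $x_0\approx 4.17$ at $\Delta=8$. Indeed $f(4,8)=576>567=f(1,8)$, so the pair $(4,8)$ escapes your crude bound and is missing from your residual list. It happens to be positive, $G(4,8)=-9+66\cdot\frac34=\frac{81}{2}$, but your argument never checks it. The correct residual list on the column $y=\Delta$ is $(3,7)$, $(3,8)$, $(4,8)$. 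With these two repairs your enumeration goes through.

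The paper avoids locating the dangerous values of $x$ altogether. For $x\ge 3$ it shows that $G$ is decreasing in $y$ and, along $y=\Delta$, increasing in $x$, using the uniform bounds $-66\le y_2\le 14$. This reduces everything to the closed form $G(3,\Delta)=-\frac23\Delta^2+4\Delta+\frac{50}{3}$. That formula gives $G(3,8)=6$ and also exhibits $G(3,9)=-\frac43$ analytically, which matches your identification of $(3,8)$ as the tight pair.
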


\begin{proof}
To definitively prove global positivity $G(x,y) > 0$, we first analyze the dual penalty coefficient $y_2(\Delta)$. By definition, it evaluates to:
\begin{equation*}
y_2(\Delta) = 2\big(f(1,\Delta) - f(2,\Delta)\big) = 2(-\Delta^2 + 3\Delta + 7).
\end{equation*}
Taking the derivative $y_2'(\Delta) = -4\Delta + 6$, we observe that $y_2(\Delta)$ is strictly decreasing on the interval $[3, 8]$. Consequently, it attains its maximum at $\Delta = 3$ with $y_2(3) = 14$, and its absolute minimum at $\Delta = 8$ with $y_2(8) = -66$.

With these bounds established, we partition the valid domain into three strictly defined, mutually exclusive cases based on the value of $x$.

\textbf{Case 1:} $x = 1$ and $y \in [2, \Delta-1]$.
By the definition of the dual constraints, $G(1, \Delta) = 0$. We express $G(1,y)$ directly as:
\begin{equation*}
G(1,y) = f(1,\Delta) - f(1,y) + y_2(\Delta) \left( \frac{1}{y} - \frac{1}{\Delta} \right).
\end{equation*}
Substituting $f(1,y) = y^3 + y^2 - y - 1$ and factoring the difference of polynomials yields:
\begin{equation*}
f(1,\Delta) - f(1,y) = (\Delta - y)(\Delta^2 + \Delta y + y^2 + \Delta + y - 1).
\end{equation*}
We rewrite the fractional term as $y_2(\Delta) \frac{\Delta - y}{y\Delta}$ and factor out the common term $(\Delta - y)$:
\begin{equation*}
G(1,y) = (\Delta - y) \left[ \Delta^2 + \Delta y + y^2 + \Delta + y - 1 + \frac{y_2(\Delta)}{y\Delta} \right].
\end{equation*}
Since $y \le \Delta - 1$, the first factor is strictly positive $(\Delta - y \ge 1)$. Given $\Delta \ge 3$ and $y \ge 2$, the polynomial within the bracket is strictly bounded below by its evaluation at the minimal bounds:
\begin{equation*}
\Delta^2 + \Delta y + y^2 + \Delta + y - 1 \ge 3^2 + 3(2) + 2^2 + 3 + 2 - 1 = 23.
\end{equation*}
Next, we determine the lower bound for the fractional term $\frac{y_2(\Delta)}{y\Delta}$. Isolating the dependence on $\Delta$ yields:
\begin{equation*}
\frac{y_2(\Delta)}{\Delta} = \frac{-2\Delta^2 + 6\Delta + 14}{\Delta} = -2\Delta + 6 + \frac{14}{\Delta}.
\end{equation*}
Let $H(\Delta) = -2\Delta + 6 + \frac{14}{\Delta}$. Taking the derivative $H'(\Delta) = -2 - \frac{14}{\Delta^2} < 0$ dictates that $H(\Delta)$ is strictly decreasing on $[3, 8]$. It attains its absolute minimum at the upper boundary $\Delta = 8$:
\begin{equation*}
H(\Delta) \ge H(8) = -16 + 6 + \frac{14}{8} = -8.25.
\end{equation*}
Because this minimum is negative, dividing by the positive independent variable $y$ yields the most negative possible fraction when $y$ is minimized ($y = 2$). Therefore:
\begin{equation*}
\frac{y_2(\Delta)}{y\Delta} = \frac{1}{y} H(\Delta) \ge \frac{1}{2}(-8.25) = -4.125.
\end{equation*}
Summing these bounds, the bracketed expression is strictly greater than $23 - 4.125 = 18.875 > 0$. Being the product of two strictly positive terms, $G(1,y) > 0$.

\textbf{Case 2:} $x = 2$ and $y \in [3, \Delta-1]$.
By exact constraint evaluation, $G(2, \Delta) = 0$. We analyze the monotonicity of $G(2,y)$ by computing its partial derivative with respect to $y$:
\begin{equation*}
\frac{\partial G(2,y)}{\partial y} = -\frac{\partial f(2,y)}{\partial y} - \frac{y_2(\Delta)}{y^2} = -(3y^2 + 4y - 4) - \frac{y_2(\Delta)}{y^2}.
\end{equation*}
For $y \ge 3$, the polynomial component is bounded as $3y^2 + 4y - 4 \ge 27 + 12 - 4 = 35$. Referencing the extremal values of $y_2(\Delta)$ established at the beginning of the proof, the maximum magnitude of $-y_2(\Delta)$ is exactly $-y_2(8) = 66$. Combined with the minimum of $y^2 = 9$, we obtain $-y_2(\Delta)/y^2 \le 66/9 \approx 7.33$.
\begin{equation*}
\frac{\partial G(2,y)}{\partial y} \le -35 + 7.33 = -27.67 < 0.
\end{equation*}
Since the derivative is strictly negative, $G(2,y)$ is strictly decreasing. Because the function evaluates to zero at the upper boundary, it directly implies that for all internal points $y < \Delta$, the strict inequality $G(2,y) > G(2,\Delta) = 0$ holds.

\textbf{Case 3:} $x \ge 3$ and $y \in [4, \Delta]$.
We analyze the general partial derivative with respect to $y$:
\begin{equation*}
\frac{\partial G(x,y)}{\partial y} = -(3y^2 + 2xy - x^2) - \frac{y_2(\Delta)}{y^2}.
\end{equation*}
Let $P(x,y) = 3y^2 + 2xy - x^2$. Since $\frac{\partial P}{\partial x} = 2(y-x) > 0$, the function $P$ strictly increases with $x$. Bounding $x \ge 3$ and $y > x \implies y \ge 4$:
\begin{equation*}
P(x,y) \ge P(3,y) = 3y^2 + 6y - 9 \ge P(3,4) = 48 + 24 - 9 = 63.
\end{equation*}
Using the previously established maximum $-y_2(8) = 66$ and minimum $y^2 = 16$, we bound the fractional term as $-y_2(\Delta)/y^2 \le 66/16 = 4.125$, obtaining a strict upper bound:
\begin{equation*}
\frac{\partial G(x,y)}{\partial y} \le -63 + 4.125 = -58.875 < 0.
\end{equation*}
Thus, for any $x \ge 3$, the function strictly decreases with $y$, meaning its minimum must occur at $y = \Delta$. We now evaluate the behavior along this upper boundary $G(x, \Delta)$ by taking the partial derivative with respect to $x$:
\begin{equation*}
\frac{\partial G(x, \Delta)}{\partial x} = (3x^2 + 2x\Delta - \Delta^2) - \frac{y_2(\Delta)}{x^2}.
\end{equation*}
Let $Q(x, \Delta) = 3x^2 + 2x\Delta - \Delta^2$. Since $\frac{\partial Q}{\partial x} = 6x + 2\Delta > 0$, $Q$ is strictly increasing with $x$. For $x \ge 3$:
\begin{equation*}
Q(x, \Delta) \ge Q(3, \Delta) = 27 + 6\Delta - \Delta^2.
\end{equation*}
The quadratic function $27 + 6\Delta - \Delta^2$ is monotonically decreasing on $[3, 8]$ and achieves its minimum at $\Delta=8$, evaluating to $27 + 48 - 64 = 11$. For the independent fractional term, the minimum of $-y_2(\Delta)$ occurs at its maximum positive value, $y_2(3) = 14$, so $-y_2(\Delta)/x^2 \ge -14/9$. Thus:
\begin{equation*}
\frac{\partial G(x, \Delta)}{\partial x} \ge 11 - \frac{14}{9} = \frac{85}{9} > 0.
\end{equation*}
Because $\frac{\partial G(x, \Delta)}{\partial x} > 0$, the boundary function $G(x, \Delta)$ strictly increases with $x$. Its absolute minimum over $x \ge 3$ occurs exactly at $x = 3$. Evaluating $G(3, \Delta)$ directly yields a downward-opening parabola:
\begin{equation*}
G(3, \Delta) = -\frac{2}{3}\Delta^2 + 4\Delta + \frac{50}{3}.
\end{equation*}
This parabola takes its absolute minimum on the interval $[3, 8]$ at $\Delta = 8$, evaluating to $G(3, 8) = \frac{18}{3} = 6 > 0$. 

Since $G(x,y) > 0$ strictly holds across all three mutually exclusive and exhaustive cases, the proof is complete.
\end{proof}

With the global positivity of the $SO_{11}$ gap function analytically verified for $\Delta \le 8$, the integer linear programming conditions are strictly satisfied, leading to the following consequence."

\begin{corollary}[$SO_{11}$ Index]
\label{cor:so11}
Let $T$ be a tree of order $n$ with maximum degree $3 \le \Delta \le 8$. The maximum value of the $SO_{11}$ index is uniquely achieved by the extremal tree strictly restricted to the optimal edge basis $\{(1, \Delta), (2, \Delta)\}$.
\end{corollary}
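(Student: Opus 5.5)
The plan is to obtain Corollary~\ref{cor:so11} by combining the Optimality Criterion (Lemma~\ref{lem:general-f-condition}) with Lemma~\ref{lem:so11_gap}, and then passing from the continuous LP to actual trees via the decomposition \eqref{eq:general_ti_decomposition}. We do not use Theorem~\ref{thm:main_1D}, because its hypothesis $f(1,\Delta)>f(2,\Delta)$ fails for $\Delta\ge 5$.

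\textbf{Step 1: dual variables.} Fix $3\le\Delta\le 8$ and $n\ge\Delta+2$. Take $f(x,y)=y^3+y^2x-x^2y-x^3$, and form the dual variables from the binding equations \eqref{eq:dual_eq1}--\eqref{eq:dual_eq2}. This gives $y_2=2(-\Delta^2+3\Delta+7)$ and $y_1=f(1,\Delta)-(1+\frac1\Delta)y_2$. Lemma~\ref{lem:general-f-condition} shows that the resulting gap function is exactly the $G$ of Lemma~\ref{lem:so11_gap}. The derivation of Lemma~\ref{lem:general-f-condition} never uses the sign of $y_2$, so it applies even when $y_2<0$.

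\textbf{Step 2: strict dual feasibility.} Lemma~\ref{lem:so11_gap} gives $G(i,j)>0$ for every non-basic pair with $i<j$. The diagonal pairs $(i,i)$ also appear in the LP, but Lemma~\ref{lem:so11_gap} only treats $x<y$, so this is the main gap to close. Here $f(i,i)=0$, and $G(i,i)=f(1,\Delta)-y_2(1+\frac1\Delta-\frac2i)$ must be checked separately. I would bound it below using $f(1,\Delta)=\Delta^3+\Delta^2-\Delta-1\ge 32$ and $|y_2|\le 66$, together with the fact that $1+\frac1\Delta-\frac2i\in(-1,1+\frac1\Delta]$.
\begin{itemize}
\item For $y_2>0$ (that is, $\Delta\in\{3,4\}$), I would use $y_2\le 14$ and $1+\frac1\Delta-\frac2i\le \frac43$, so the subtracted term is at most $\approx18.7<32$.
\item For $y_2<0$, the penalty term is $+|y_2|(1+\frac1\Delta-\frac2i)$. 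This is negative only for $i=1$, where it is bounded by $|y_2|(1-\frac1\Delta)$. This must be compared with $f(1,\Delta)$, for instance $511-57.75>0$ at $\Delta=8$. A case-by-case evaluation for $\Delta=5,\dots,8$ is finite and routine.
\end{itemize}
The pair $(1,1)$ occurs only in $K_2$, but it should still be covered. Together these give $G>0$ on every non-basic edge type, so Lemma~\ref{lem:general-f-condition} yields the unique LP optimum $m_{1,\Delta},m_{2,\Delta}$ as stated.

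\textbf{Step 3: back to trees.} For any tree $T$ of order $n$ and maximum degree $\Delta$, the identities \eqref{eq:LP-constraint-sum-SO}--\eqref{eq:LP-constraint-w-SO} hold. By \eqref{eq:general_ti_decomposition}, $SO_{11}(T)=(y_1+y_2)n-y_1-\sum_{(i,j)\notin\mathcal B}G(i,j)m_{i,j}$. Since every $G$ in this sum is positive, $SO_{11}(T)\le (y_1+y_2)n-y_1$, with equality if and only if all non-basic multiplicities vanish. Equality therefore forces $T$ to use only edges of types $(1,\Delta)$ and $(2,\Delta)$.

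By Section~\ref{sec:applications}, such trees exist precisely when $n\equiv 1\pmod\Delta$, and they are exactly the class $\mathcal T(2,\Delta,q)$. This proves the uniqueness statement in the sense used by the earlier corollaries. The only real obstacle is the diagonal check in Step~2; everything else is assembling results already proved.
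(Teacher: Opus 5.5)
Your proposal is correct and takes the same route as the paper, whose proof simply feeds Lemma~\ref{lem:so11_gap} into Lemma~\ref{lem:general-f-condition}; it leaves implicit the passage to trees via \eqref{eq:general_ti_decomposition} and $\mathcal{T}(2,\Delta,q)$. Your Step 2 treatment of the diagonal pairs $(i,i)$, where $f(i,i)=0$, is a genuine and correct addition, because Lemma~\ref{lem:so11_gap} covers only $x<y$ and the paper's proof never addresses these LP variables; just fix the slip $f(1,8)=567$ (not $511$), which only makes $G(1,1)$ larger.
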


\begin{proof}
According to the general optimality criterion established in Lemma \ref{lem:general-f-condition}, the optimal tree configuration uniquely maximizes a given topological index if the corresponding gap function satisfies $G(x,y) > 0$ for all valid internal pairs $(x,y) \notin \{(1, \Delta), (2, \Delta)\}$. By Lemma \ref{lem:so11_gap}, this condition is strictly satisfied for the $SO_{11}$ index across the entire discrete domain $1 \le x < y \le \Delta$ for any $\Delta \le 8$.
\end{proof}

\begin{remark}
The analytical evaluation of $G(3, \Delta)$ reveals precisely why $\Delta \le 8$ is a sharp constraint for the $SO_{11}$ index. At $\Delta = 9$, the gap function evaluates to $G(3, 9) = -\frac{4}{3} < 0$. Within the linear programming framework, this negative penalty guarantees that a tree strictly restricted to the standard basis $\{(1, \Delta), (2, \Delta)\}$ is no longer optimal. Instead, introducing non-basic edges of type $(3, 9)$ yields a strict increase in the objective function. This suboptimality requires a shift to a new configuration, which we formalize as a degree-dependent basis in Section \ref{sec:degree_dependent_basis_so11}.
\end{remark}

While the Sigma index was recently investigated \cite{basic2026sigma_arxiv, knor2026complete}, our framework effortlessly yields a highly concise alternative proof.

\begin{corollary}[Sigma Index]
\label{cor:sigma}
Let $T$ be a tree of order $n$ with maximum degree $\Delta \ge 3$. The maximum value of the Sigma index is uniquely achieved by the extremal tree strictly restricted to the optimal edge basis $\{(1, \Delta), (2, \Delta)\}$.
\end{corollary}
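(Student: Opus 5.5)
The plan is to apply Theorem \ref{thm:main_1D} directly, exactly as in the Tangent and $SO_1$ cases. Under the convention $x<y$ the weight function is $f(x,y)=(y-x)^2$. Its extension satisfies $\frac{\partial f}{\partial y}=2(y-x)>0$ on the open region $x<y$, so the monotonicity hypothesis holds. Next we compute the penalty coefficient:
\[
y_2 = 2\big(f(1,\Delta)-f(2,\Delta)\big) = 2\big((\Delta-1)^2-(\Delta-2)^2\big) = 2(2\Delta-3).
\]
This is strictly positive for $\Delta\ge 3$, which confirms $f(1,\Delta)>f(2,\Delta)$.

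With both preconditions verified, we form the test function \eqref{eq:test_hx}:
\[
h(x) = (\Delta-1)^2-(\Delta-x)^2 - 2(2\Delta-3)\,\frac{x-1}{x}.
\]
The first difference factors as $(x-1)(2\Delta-1-x)$. Pulling out $(x-1)/x$ gives
\[
h(x)=\frac{x-1}{x}\Big[x(2\Delta-1-x)-2(2\Delta-3)\Big].
\]
The bracket is a quadratic in $x$ that vanishes at $x=2$, as it must, since $G(2,\Delta)=0$. Its other root is $x=2\Delta-3$, so the bracket should factor as $(x-2)(2\Delta-3-x)$. Hence
\[
h(x)=\frac{(x-1)(x-2)(2\Delta-3-x)}{x}.
\]
For $x\in[3,\Delta]$ we have $x-1>0$ and $x-2>0$. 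We also have $2\Delta-3-x\ge \Delta-3\ge 0$, with equality only when $x=\Delta=3$.

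The main obstacle is this boundary point $x=\Delta$. The factor $2\Delta-3-x$ vanishes at $x=\Delta=3$, so the strict hypothesis $h(x)>0$ on the closed interval $[3,\Delta]$ fails in that case. I would resolve this by noting that the proof of Theorem \ref{thm:main_1D} only needs $h(i)>0$ at the integers $i\in\{3,\dots,\Delta-1\}$. At these points $2\Delta-3-i\ge \Delta-2>0$, so positivity is immediate.

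The pair $(\Delta,\Delta)$, which Theorem \ref{thm:main_1D} does not cover, then needs a direct check. There $G(\Delta,\Delta)=f(1,\Delta)-0-y_2\big(1-\frac{1}{\Delta}\big)=(\Delta-1)^2-\frac{2(2\Delta-3)(\Delta-1)}{\Delta}=\frac{(\Delta-1)(\Delta-2)(\Delta-3)}{\Delta}$. This vanishes at $\Delta=3$, so for $\Delta=3$ uniqueness would have to be argued separately or stated with that caveat. For $\Delta\ge4$ it is positive.

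The remaining non-basic pairs with $x\in\{1,2\}$ are handled by Lemma \ref{lem:reduction}: $G(1,y)$ and $G(2,y)$ are strictly decreasing in $y$ and vanish at $y=\Delta$, so they are positive for $y<\Delta$. Pairs $(i,i)$ with $i<\Delta$ have $f=0$, and they are dominated in the same way via $G(i,i)\ge G(i,\Delta)$, using monotonicity of the penalty term. Combining these cases with Lemma \ref{lem:general-f-condition} completes the argument.
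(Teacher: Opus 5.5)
Your route is the paper's: apply Theorem~\ref{thm:main_1D} with $y_2=4\Delta-6$ and the factorization $h(x)=(x-1)(x-2)(2\Delta-3-x)/x$.

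The difference is at the endpoint, and there you are right and the paper is not. Its proof concludes from $h(x)\ge 0$, although Theorem~\ref{thm:main_1D} requires strict positivity. It also leaves implicit the pairs $(\Delta,\Delta)$, $(1,j)$ and $(2,j)$ with $j<\Delta$, and the diagonal pairs. You treat all of these correctly.

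The one thing to change is your suggestion that $\Delta=3$ might be ``argued separately''. It cannot, because the tie $G(3,3)=0$ is genuine. For $\Delta=3$ one has $y_2=6$ and $y_1=-4$, and the decomposition \eqref{eq:general_ti_decomposition} gives $\sigma(T)=2n+4-4m_{1,2}-2m_{2,2}$ for every tree of maximum degree $3$. So every such tree without $(1,2)$ or $(2,2)$ edges is extremal. For example, at $n=10$ the standard-basis tree ($n_1=5$, $n_2=2$, $n_3=3$) has $\sigma=24$. Any tree with four degree-$3$ vertices and six leaves also has $\sigma=24$.

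Hence your argument establishes exactly what is true: uniqueness holds for $\Delta\ge 4$. For $\Delta=3$ the stated uniqueness, and the paper's proof of it, fail, and the statement needs your caveat.
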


\begin{proof}
The weight function $f(x,y) = (y-x)^2$ trivially satisfies Lemma \ref{lem:reduction} since $\frac{\partial f}{\partial y} = 2(y-x) > 0$ for $x < y$. Evaluating the dual penalty coefficient yields:
\begin{equation*}
y_2 = 2\big( (\Delta - 1)^2 - (\Delta - 2)^2 \big) = 4\Delta - 6.
\end{equation*}
Substituting $f(x, \Delta) = (\Delta - x)^2$ and $y_2$ into \eqref{eq:test_hx}, we obtain:
\begin{equation*}
h(x) = (\Delta - 1)^2 - (\Delta - x)^2 - (4\Delta - 6)\frac{x-1}{x}.
\end{equation*}
Expanding the terms and reducing to a common denominator condenses the expression into a single rational function, which gracefully factors into a closed form:
\begin{equation}
h(x) = \frac{-x^3 + 2\Delta x^2 - (6\Delta - 7)x + 4\Delta - 6}{x} = \frac{(x-1)(x-2)(2\Delta - 3 - x)}{x}.
\end{equation}
For any valid interior degree $x \in [3, \Delta]$, the factors $(x-1)$, $(x-2)$, and $x$ are strictly positive. The remaining linear factor is bounded as $2\Delta - 3 - x \ge 2\Delta - 3 - \Delta = \Delta - 3 \ge 0$. Consequently, $h(x) \ge 0$ unconditionally holds for all valid $x$, satisfying the conditions of Theorem \ref{thm:main_1D}.
\end{proof}

\begin{corollary}[Albertson Index]
\label{cor:albertson}
Let $T$ be a tree of order $n$ with maximum degree $\Delta \ge 3$. The maximum value of the Albertson index is uniquely achieved by the extremal tree strictly restricted to the optimal edge basis $\{(1, \Delta), (2, \Delta)\}$.
\end{corollary}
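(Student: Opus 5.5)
The plan is to apply Theorem~\ref{thm:main_1D} directly, as in the proofs of Corollaries~\ref{cor:so1} and~\ref{cor:sigma}. Under the convention $x<y$, the Albertson weight function is $f(x,y)=y-x$.

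First, check the two preconditions. We have $\frac{\partial f}{\partial y}=1>0$. The penalty coefficient is
\[
y_2=2\big(f(1,\Delta)-f(2,\Delta)\big)=2\big((\Delta-1)-(\Delta-2)\big)=2>0,
\]
so $f(1,\Delta)>f(2,\Delta)$ holds for every $\Delta\ge 3$.

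Next, substitute $f(x,\Delta)=\Delta-x$ and $y_2=2$ into the test function \eqref{eq:test_hx}:
\[
h(x)=(\Delta-1)-(\Delta-x)-2\,\frac{x-1}{x}=(x-1)-\frac{2(x-1)}{x}=\frac{(x-1)(x-2)}{x}.
\]
This is the same factorized shape as in the Tangent case, up to the constant $2\Delta$. All three factors are strictly positive for $x\in[3,\Delta]$, so $h(x)>0$ there. Theorem~\ref{thm:main_1D} then gives unique optimality of the basis $\{(1,\Delta),(2,\Delta)\}$. For $n\equiv 1\pmod{\Delta}$, the extremal trees are exactly the class $\mathcal{T}(2,\Delta,q)$.

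The step needing most care is completeness of the list of non-basic pairs, since Theorem~\ref{thm:main_1D} as written only treats $3\le i<j$. I would address the remaining cases separately.

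\emph{Rows $x=1$ and $x=2$.} Lemma~\ref{lem:reduction} says $G(x,\cdot)$ is strictly decreasing in $y$, and $G(1,\Delta)=G(2,\Delta)=0$. Hence $G(1,j)>0$ and $G(2,j)>0$ for all $j<\Delta$.

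\emph{Diagonal pairs $(i,i)$.} Here $f(i,i)=0$, so
\[
G(i,i)=(\Delta-1)-2\Big(1+\frac1\Delta-\frac2i\Big)=\Delta-3-\frac2\Delta+\frac4i .
\]
Since $i\le\Delta$, we have $\frac4i\ge\frac4\Delta$, and therefore $G(i,i)\ge \Delta-3+\frac2\Delta>0$ for $\Delta\ge 3$. This covers the pair $(\Delta,\Delta)$ as well.

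With every non-basic gap strictly positive, Lemma~\ref{lem:general-f-condition} yields uniqueness.
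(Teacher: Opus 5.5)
Your proof is correct and follows the paper's route: you verify $\partial f/\partial y=1>0$, compute $y_2=2$, and apply Theorem~\ref{thm:main_1D} with $h(x)=(x-1)(x-2)/x>0$ on $[3,\Delta]$. You also check the rows $x\in\{1,2\}$ (using the monotonicity from Lemma~\ref{lem:reduction} and $G(1,\Delta)=G(2,\Delta)=0$) and the diagonal pairs ($G(i,i)=\Delta-3-\frac{2}{\Delta}+\frac{4}{i}>0$); this is a worthwhile addition, because the paper's proof of Theorem~\ref{thm:main_1D} only treats $3\le i<j\le\Delta$ and leaves these non-basic pairs unchecked.
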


\begin{proof}
The weight function $f(x,y) = y - x$ trivially satisfies Lemma \ref{lem:reduction} since $\frac{\partial f}{\partial y} = 1 > 0$. Evaluating the dual penalty coefficient yields:
\begin{equation*}
y_2 = 2\big( (\Delta - 1) - (\Delta - 2) \big) = 2.
\end{equation*}
Substituting $f(x, \Delta) = \Delta - x$ and $y_2 = 2$ into \eqref{eq:test_hx}, we obtain:
\begin{equation*}
h(x) = (\Delta - 1) - (\Delta - x) - 2\frac{x-1}{x}.
\end{equation*}
By cancelling the $\Delta$ terms and factoring the expression over a common denominator, the function gracefully condenses into a concise closed form:
\begin{equation}
h(x) = x - 1 - \frac{2(x-1)}{x} = \frac{(x-1)(x-2)}{x}.
\end{equation}
For any valid interior degree $x \in [3, \Delta]$, the factors $(x-1)$, $(x-2)$, and $x$ are strictly positive. Consequently, $h(x) > 0$ unconditionally holds for all valid $x$, satisfying the conditions of Theorem \ref{thm:main_1D}.
\end{proof}

\section{Alternative Optimality of the Basis $\{(1, \Delta), (\Delta, \Delta)\}$}
\label{sec:alternative_opt}

Theorem \ref{thm:main_1D} successfully dictates the extremal topology for the vast majority of indices, provided $\Delta$ is sufficiently large. However, as demonstrated by the $AG$ index constraint ($\Delta \ge 9$), the standard basis mathematically collapses for small maximum degrees. In this section, we address this boundary behavior by establishing an alternative optimality criterion.

For certain topological indices evaluated at smaller maximum degrees (e.g., chemical trees where $\Delta = 3$), the maximum is strictly achieved by the alternative basis $\{(1, \Delta), (\Delta, \Delta)\}$. In such trees, every internal vertex necessarily attains the maximum degree $\Delta$. 

One might intuitively attempt to realize this configuration by evaluating the universal bipartite class $\mathcal{T}(k, \Delta, q)$ at the limit $k = \Delta$. However, this substitution is structurally incomplete. When the partitions $V_k$ and $V_\Delta$ merge, the bipartite cross-edges $m_{k, \Delta}$ collapse into diagonal self-edges $m_{\Delta, \Delta}$. In the degree sum incidence identities \eqref{eq:tree-ni-i}, diagonal edges carry a coefficient of 2 (connecting two vertices of the same degree), whereas cross-edges carry a coefficient of 1. This discrete structural collapse alters the underlying Diophantine system, allowing internal vertices to connect freely rather than strictly bipartitely. Consequently, the alternative basis generates a much broader internal topology that necessitates an independent optimality criterion.

\begin{lemma}[Alternative Optimality Criterion]
\label{lem:alternative-f-condition}
Let $\Delta \ge 3$ and $n \ge \Delta+2$. Consider the linear program consisting of maximizing \eqref{eq:LP-objective-g} subject to \eqref{eq:LP-constraint-sum-SO} and \eqref{eq:LP-constraint-w-SO}. This problem admits a unique optimal solution mapped to the basis $m_{1, \Delta}$ and $m_{\Delta, \Delta}$, where:
\begin{equation}
m_{1, \Delta} = \frac{(\Delta-2)n + 2}{\Delta-1}, \qquad m_{\Delta, \Delta} = \frac{n-\Delta-1}{\Delta-1},
\end{equation}
if and only if for all valid non-basic pairs $(i,j) \notin \{(1, \Delta), (\Delta, \Delta)\}$, the weight function satisfies $G^*(i,j) > 0$, where the alternative gap function is defined as:
\begin{equation}
G^*(i,j) = f(\Delta, \Delta) - f(i,j) + y_2^* \left( \frac{1}{i} + \frac{1}{j} - \frac{2}{\Delta} \right),
\end{equation}
and the alternative dual penalty coefficient is $y_2^* = \frac{\Delta}{\Delta - 1} \big( f(1, \Delta) - f(\Delta, \Delta) \big)$.
\end{lemma}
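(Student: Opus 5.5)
The argument mirrors the proof of Lemma \ref{lem:general-f-condition}: solve the primal $2\times 2$ system for the basic variables, then obtain the dual variables from complementary slackness.

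\emph{Primal basic solution.} With only $m_{1,\Delta}$ and $m_{\Delta,\Delta}$ nonzero, constraints \eqref{eq:LP-constraint-sum-SO} and \eqref{eq:LP-constraint-w-SO} read
\[
m_{1,\Delta}+m_{\Delta,\Delta}=n-1,\qquad \Bigl(1+\tfrac{1}{\Delta}\Bigr)m_{1,\Delta}+\tfrac{2}{\Delta}m_{\Delta,\Delta}=n .
\]
The determinant is $\frac{2}{\Delta}-1-\frac{1}{\Delta}=\frac{1-\Delta}{\Delta}\neq 0$ for $\Delta\ge 3$, so these two columns form a basis. Substituting $m_{1,\Delta}=n-1-m_{\Delta,\Delta}$ into the second equation gives
\[
\frac{\Delta-1}{\Delta}\,m_{\Delta,\Delta}=\Bigl(1+\tfrac1\Delta\Bigr)(n-1)-n=\frac{n-\Delta-1}{\Delta},
\]
hence $m_{\Delta,\Delta}=\frac{n-\Delta-1}{\Delta-1}$. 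Then $m_{1,\Delta}=n-1-m_{\Delta,\Delta}=\frac{(\Delta-2)n+2}{\Delta-1}$. Both values are strictly positive because $n\ge\Delta+2$ and $\Delta\ge 3$, so the basic solution is feasible and nondegenerate.

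\emph{Dual variables.} Complementary slackness forces the two basic dual constraints to hold with equality:
\[
y_1+\Bigl(1+\tfrac1\Delta\Bigr)y_2=f(1,\Delta),\qquad y_1+\tfrac{2}{\Delta}y_2=f(\Delta,\Delta).
\]
Subtracting the second from the first gives $\frac{\Delta-1}{\Delta}y_2=f(1,\Delta)-f(\Delta,\Delta)$, so $y_2=y_2^*=\frac{\Delta}{\Delta-1}\bigl(f(1,\Delta)-f(\Delta,\Delta)\bigr)$. From the second equation, $y_1=f(\Delta,\Delta)-\frac{2}{\Delta}y_2^*$. Substituting into the reduced cost $G^*(i,j)=y_1+\bigl(\frac1i+\frac1j\bigr)y_2^*-f(i,j)$ yields exactly the stated expression
\[
G^*(i,j)=f(\Delta,\Delta)-f(i,j)+y_2^*\Bigl(\tfrac1i+\tfrac1j-\tfrac2\Delta\Bigr).
\]

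\emph{Equivalence.} Here the decomposition \eqref{eq:general_ti_decomposition} is used with the alternative basis. For every feasible $m$ it gives $TI=(y_1+y_2^*)n-y_1-\sum_{(i,j)\notin\mathcal{B}}G^*(i,j)\,m_{i,j}$.

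For sufficiency, assume $G^*>0$ on all non-basic pairs. Then every feasible point with some nonbasic $m_{i,j}>0$ has a strictly smaller objective than the basic solution, which therefore is the unique optimum.

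For necessity, assume the basic solution is the unique optimum. If some nonbasic $G^*(i,j)<0$, the standard pivot step — increasing $m_{i,j}$ and adjusting the two basic variables, which is possible since they are strictly positive — strictly increases $TI$, a contradiction. If some $G^*(i,j)=0$, the same move preserves the objective and produces a distinct optimal solution, contradicting uniqueness.

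The main point needing care is this necessity direction: it is exactly where nondegeneracy ($m_{1,\Delta},m_{\Delta,\Delta}>0$, secured by $n\ge\Delta+2$) is used to justify that a small pivot stays feasible.
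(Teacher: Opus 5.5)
Your proposal is correct and follows the same route as the paper: solve the $2\times 2$ primal system for $m_{1,\Delta}, m_{\Delta,\Delta}$, get $y_2^*$ and $y_1=f(\Delta,\Delta)-\frac{2}{\Delta}y_2^*$ from the two binding dual constraints, and substitute into the reduced cost to obtain $G^*$. The only difference is that the paper simply invokes strict complementary slackness for the ``if and only if,'' whereas you prove it (sufficiency from the penalty decomposition, necessity by a small pivot). In doing so you make explicit that $n\ge\Delta+2$ gives the nondegeneracy $m_{\Delta,\Delta}>0$ that the necessity direction requires.
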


\begin{proof}
We associate dual variables $y_1$ and $y_2^*$ with the constraints \eqref{eq:LP-constraint-sum-SO} and \eqref{eq:LP-constraint-w-SO}. By strict complementary slackness, for the unique optimal basic feasible solution where $m_{1, \Delta} > 0$ and $m_{\Delta, \Delta} > 0$, the corresponding dual constraints must be exact equalities:
\begin{align}
y_1 + \left(1 + \frac{1}{\Delta}\right)y_2^* &= f(1, \Delta), \label{eq:dual_alt1}\\
y_1 + \left(\frac{2}{\Delta}\right)y_2^* &= f(\Delta, \Delta). \label{eq:dual_alt2}
\end{align}
Subtracting \eqref{eq:dual_alt2} from \eqref{eq:dual_alt1} directly yields $y_2^* = \frac{\Delta}{\Delta - 1} \big( f(1, \Delta) - f(\Delta, \Delta) \big)$. For non-basic variables $m_{i,j}$, dual feasibility requires $y_1 + \left(\frac{1}{i}+\frac{1}{j}\right)y_2^* > f(i,j)$. Substituting $y_1 = f(\Delta, \Delta) - \left(\frac{2}{\Delta}\right)y_2^*$ into this inequality yields exactly $G^*(i,j) > 0$. 

By explicitly treating non-basic variables as zero, we can solve the equality constraints for the basis:
\begin{align*}
m_{1,\Delta} + m_{\Delta,\Delta} &= n - 1, \\
\left(1 + \frac{1}{\Delta}\right)m_{1,\Delta} + \left(\frac{2}{\Delta}\right)m_{\Delta,\Delta} &= n.
\end{align*}
Substituting $m_{\Delta,\Delta} = n - 1 - m_{1,\Delta}$ into the second equation algebraically isolates $m_{1, \Delta} = \frac{(\Delta-2)n + 2}{\Delta-1}$, which immediately determines $m_{\Delta, \Delta} = \frac{n-\Delta-1}{\Delta-1}$.
\end{proof}

Before applying this criterion, we characterize the exact parameters under which a tree realizes this collapsed basis without penalty. A tree restricted strictly to $\{(1, \Delta), (\Delta, \Delta)\}$ contains exclusively leaves ($n_1$) and vertices of maximum degree ($n_\Delta$). Substituting this into \eqref{eq:tree-sum-ni} and \eqref{eq:tree-sum-deg} yields:
\begin{align*}
n_1 + n_\Delta &= n, \\
n_1 + \Delta n_\Delta &= 2n - 2.
\end{align*}
Subtracting these equations directly isolates $n_\Delta$:
\begin{equation}
n_\Delta = \frac{n - 2}{\Delta - 1}, \qquad n_1 = \frac{n(\Delta - 2) + 2}{\Delta - 1}.
\end{equation}
For $n_\Delta$ to be an integer, the tree order must satisfy $n \equiv 2 \pmod{\Delta - 1}$. Parameterizing the internal vertices as $n_\Delta = p$ for some integer $p \ge 2$, the exact order becomes $n = p(\Delta - 1) + 2$. The parameter $p$ is strictly bounded below by 2 because the premise of Lemma \ref{lem:alternative-f-condition} dictates $n \ge \Delta+2$, explicitly excluding the star graph $K_{1, \Delta}$. The remaining parameters follow immediately:
\begin{equation}
n_1 = m_{1, \Delta} = p(\Delta - 2) + 2, \qquad m_{\Delta, \Delta} = p - 1.
\end{equation}

To formalize the topological existence of such graphs, we define the corresponding unipartite-internal tree class.

\begin{definition}
Let $p \ge 2$. We define $\mathcal{T}^\Delta_{\mathrm{opt}}(p)$ as the family of all trees constructed by the following procedure: Take an arbitrary underlying tree $H$ on $p$ vertices. For each vertex $v \in V(H)$, attach exactly $\Delta - d_H(v)$ pendant vertices to it, where $d_H(v)$ is the degree of $v$ in $H$.
\end{definition}

By construction, any tree in $\mathcal{T}^\Delta_{\mathrm{opt}}(p)$ has exactly $p$ internal vertices, all of which attain degree $\Delta$. Its internal edges correspond directly to the edges of $H$, yielding $m_{\Delta, \Delta} = p-1$, while the total number of attached pendant edges evaluates exactly to:
\begin{equation*}
m_{1, \Delta} = \sum_{v \in V(H)} \big(\Delta - d_H(v)\big) = p\Delta - 2(p-1) = p(\Delta - 2) + 2.
\end{equation*}

Conversely, any tree strictly restricted to the basis $\{(1, \Delta), (\Delta, \Delta)\}$ contains only internal vertices of degree $\Delta$. Because the tree is connected, these $p$ maximum degree vertices must induce a connected subtree $H$. Thus, the graph can only be formed by attaching leaves to $H$ until all its vertices reach degree $\Delta$, completely characterizing all valid tree configurations for $n \equiv 2 \pmod{\Delta - 1}$.

An illustrative example of this structural collapse is provided in Figure \ref{fig:optimal_alt}.

\begin{figure}[htpb]
    \centering
    \begin{tikzpicture}[
        v3/.style={circle, draw=black, fill=black, inner sep=1.5pt},
        leaf/.style={circle, draw=black, fill=white, inner sep=1pt},
        edge/.style={thick}
    ]
    
    \node[v3] (D1) at (-1.5, 0) {};
    \node[v3] (D2) at (1.5, 0) {};
    
    \draw[edge] (D1) -- (D2);
    
    \node[leaf] (L11) at ([shift={(135:1.2)}]D1) {};
    \node[leaf] (L12) at ([shift={(225:1.2)}]D1) {};
    \draw (D1) -- (L11);
    \draw (D1) -- (L12);
    
    \node[leaf] (L21) at ([shift={(45:1.2)}]D2) {};
    \node[leaf] (L22) at ([shift={(315:1.2)}]D2) {};
    \draw (D2) -- (L21);
    \draw (D2) -- (L22);
    
    \end{tikzpicture}
\caption{An extremal chemical tree belonging to the alternative class $\mathcal{T}^3_{\mathrm{opt}}(2)$ for the basis $\{(1, 3), (3, 3)\}$. Illustrated is the configuration for $\Delta=3$ and $p=2$, yielding order $n=6$. The internal maximum degree vertices (solid circles) are adjacent via the edge $(3,3)$, eliminating the bipartite structure.}
\label{fig:optimal_alt}
\end{figure}

\begin{lemma}[[Positivity of the AG Gap Function]
\label{lem:ag_alt_gap}
For any integer maximum degree $3 \le \Delta \le 8$, the alternative gap function $G^*(x,y)$ for the Arithmetic-Geometric ($AG$) index is strictly positive for all valid integer pairs $1 \le x \le y \le \Delta$, excluding the basis variables $(1, \Delta)$ and $(\Delta, \Delta)$.
\end{lemma}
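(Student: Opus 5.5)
The plan is to write $G^*$ in a form that separates the nonnegative penalty term from the nonnegative irregularity term of $f$. For the $AG$ index $f(\Delta,\Delta)=1$ and $f(1,\Delta)=\frac{\Delta+1}{2\sqrt{\Delta}}$. Put $D=\sqrt{\Delta}$. Lemma \ref{lem:alternative-f-condition} then gives
\[
y_2^*=\frac{D^2}{D^2-1}\cdot\frac{(D-1)^2}{2D}=\frac{D(D-1)}{2(D+1)}>0 .
\]
Using $f(x,y)-1=\frac{(\sqrt{y}-\sqrt{x})^2}{2\sqrt{xy}}$, the gap function becomes
\[
G^*(x,y)=y_2^*\Bigl(\frac1x+\frac1y-\frac2\Delta\Bigr)-\frac{(\sqrt y-\sqrt x)^2}{2\sqrt{xy}} .
\]
The diagonal case $x=y<\Delta$ is then immediate: the second term vanishes and the first equals $2y_2^*(\frac1x-\frac1\Delta)>0$.

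Next I would handle the boundary $y=\Delta$ completely, since it contains both basic points. Substitute $t=\sqrt{x}\in[1,D]$ and use $\frac1x-\frac1\Delta=\frac{(D-t)(D+t)}{t^2D^2}$. Factoring out $\frac{D-t}{2tD}$ and clearing the denominator $(D+1)t$ leaves the quadratic
\[
(D-1)(D+t)-(D+1)(D-t)t=(D+1)t^2-(D^2+1)t+D(D-1) .
\]
This vanishes at $t=1$, as it must since $G^*(1,\Delta)=0$, and factors as $(t-1)\bigl((D+1)t-D(D-1)\bigr)$. Hence
\[
G^*(x,\Delta)=\frac{(D-t)(t-1)\bigl((D+1)t-D(D-1)\bigr)}{2t^2D(D+1)} .
\]
For integer $2\le x\le\Delta-1$ this is positive exactly when $\sqrt{x}>\frac{D(D-1)}{D+1}$. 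The worst case is $x=2$, and the condition $\sqrt2>\frac{D(D-1)}{D+1}$ holds precisely for $\Delta\le 8$: at $\Delta=8$ the right side is about $1.35$, while at $\Delta=9$ it is $1.5$. This recovers the sharp threshold and confirms the formula.

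It remains to treat off-diagonal pairs $x<y<\Delta$, including $x=1$. For these I would fix $x$ and reduce to the two endpoints $y=x$ and $y=\Delta$, both already shown positive (the second positive for $x\ge2$ and zero for $x=1$). With $u=\sqrt{y}$, the first term $y_2^*/u^2$ is convex in $u$. The second term is $\frac12\bigl(\frac{u}{\sqrt x}+\frac{\sqrt x}{u}\bigr)-1$, and on $u\ge\sqrt{x}$ it has second derivative $\sqrt{x}/u^3>0$, so it is convex too. A difference of convex functions gives no endpoint principle for free, so this is the step I expect to be the main obstacle. I would first try to show directly that $\partial G^*/\partial y<0$ on $[x,\Delta]$, which yields $G^*(x,y)>G^*(x,\Delta)\ge0$. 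The derivative is
\[
\frac{\partial G^*}{\partial y}=-\frac{y_2^*}{y^2}-\frac{1}{4\sqrt{xy}}\Bigl(1-\frac{x}{y}\Bigr),
\]
and both terms are negative for $y>x$, so this should succeed.

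For $x=1$ the same monotonicity gives $G^*(1,y)>G^*(1,\Delta)=0$ for $y<\Delta$. Combining the diagonal case, the boundary factorization and this monotonicity in $y$ covers every valid nonbasic integer pair. If some algebraic step resists, I would fall back on the fact that only finitely many pairs occur for $3\le\Delta\le8$ and evaluate them directly.
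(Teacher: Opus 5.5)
Your proposal is correct and follows essentially the same route as the paper's proof: the diagonal case, strict decrease of $G^*(x,y)$ in $y$ via the negative partial derivative, and the factorization of $G^*(x,\Delta)$ whose sign is governed by $\sqrt{x} > \frac{\sqrt{\Delta}(\sqrt{\Delta}-1)}{\sqrt{\Delta}+1}$, checked at $x=2$. The derivative argument makes your convexity concern unnecessary, and your explicit treatment of $x=1$ through $G^*(1,y)>G^*(1,\Delta)=0$ is cleaner than the paper's; the only step you leave implicit is that this threshold is increasing in $\Delta$, which the paper verifies via $R'(\Delta)>0$ (checking the six values $3\le\Delta\le 8$ directly would also do).
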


\begin{proof}
From Lemma \ref{lem:alternative-f-condition}, the continuous extension of the alternative gap function and its dual penalty coefficient are analytically defined as:
\begin{equation*}
G^*(x,y) = 1 - \frac{x+y}{2\sqrt{xy}} + y_2^*(\Delta) \left( \frac{1}{x} + \frac{1}{y} - \frac{2}{\Delta} \right), \qquad y_2^*(\Delta) = \frac{\Delta}{\Delta - 1} \left( \frac{\Delta+1}{2\sqrt{\Delta}} - 1 \right).
\end{equation*}

We first establish that the penalty coefficient is strictly positive. Since $\frac{\Delta+1}{2\sqrt{\Delta}} > 1$ for all $\Delta \ge 2$, it trivially follows that $y_2^*(\Delta) > 0$. To prove $G^*(x,y) > 0$ for all non-basic pairs, we partition the valid domain into two cases.

\textbf{Case 1: $x = y < \Delta$.} \\
The alternative gap function simplifies to:
\begin{equation*}
G^*(x,x) = 1 - 1 + y_2^*(\Delta) \left( \frac{2}{x} - \frac{2}{\Delta} \right) = 2y_2^*(\Delta) \frac{\Delta - x}{x\Delta}.
\end{equation*}
Given $y_2^*(\Delta) > 0$ and $x < \Delta$, the fractional multiplier is strictly positive, guaranteeing $G^*(x,x) > 0$.

\textbf{Case 2: $x < y \le \Delta$.} \\
We evaluate the monotonicity of $G^*(x,y)$ with respect to its second variable by computing the partial derivative:
\begin{equation*}
\frac{\partial G^*(x,y)}{\partial y} = -\frac{\partial f(x,y)}{\partial y} - \frac{y_2^*(\Delta)}{y^2} = -\frac{y-x}{4\sqrt{x}y^{3/2}} - \frac{y_2^*(\Delta)}{y^2}.
\end{equation*}
Since $y > x$ and $y_2^*(\Delta) > 0$, both terms are strictly negative, yielding $\frac{\partial G^*}{\partial y} < 0$. This ensures that for any fixed $x$, the penalty function $G^*(x,y)$ is strictly decreasing as $y$ increases. Consequently, the absolute minimum over $y$ occurs at the upper boundary, establishing the lower bound $G^*(x,y) \ge G^*(x, \Delta)$.

To prove global positivity, we formally evaluate $G^*(x, \Delta)$. By factoring the penalty coefficient to $y_2^*(\Delta) = \frac{\sqrt{\Delta}(\sqrt{\Delta}-1)}{2(\sqrt{\Delta}+1)}$, the continuous boundary evaluates as:
\begin{equation*}
G^*(x, \Delta) = 1 - \frac{x+\Delta}{2\sqrt{x\Delta}} + \frac{\sqrt{\Delta}(\sqrt{\Delta}-1)}{2(\sqrt{\Delta}+1)} \left( \frac{\Delta-x}{x \Delta} \right).
\end{equation*}
By establishing a common denominator and factoring out the positive root difference $(\sqrt{\Delta}-\sqrt{x})$, this expression is transformed into the closed-form product:
\begin{equation*}
G^*(x, \Delta) = \frac{(\sqrt{\Delta}-\sqrt{x})(\sqrt{x}-1)}{2\sqrt{\Delta} x (\sqrt{\Delta}+1)} \big[ (\sqrt{\Delta}+1)\sqrt{x} - \sqrt{\Delta}(\sqrt{\Delta}-1) \big].
\end{equation*}
For any strictly internal degree $1 < x < \Delta$, the fractional coefficient is strictly positive. Therefore, the sign of the gap function is entirely governed by the bracketed factor. To guarantee $G^*(x, \Delta) > 0$, the following strict inequality must hold:
\begin{equation*}
(\sqrt{\Delta}+1)\sqrt{x} - \sqrt{\Delta}(\sqrt{\Delta}-1) > 0 \implies \sqrt{x} > \frac{\sqrt{\Delta}(\sqrt{\Delta}-1)}{\sqrt{\Delta}+1}.
\end{equation*}
Let $R(\Delta) = \frac{\sqrt{\Delta}(\sqrt{\Delta}-1)}{\sqrt{\Delta}+1}$. The derivative of $R(\Delta)$, which is
$R'(\Delta) = \frac{(\sqrt{\Delta} + 1)^2 - 2}{2\sqrt{\Delta} (\sqrt{\Delta} + 1)^2}$, confirms it is strictly monotonically increasing. Over the specified parameter space $3 \le \Delta \le 8$, $R(\Delta)$ achieves its maximum at $\Delta = 8$, evaluating to $R(8) = \frac{8 - \sqrt{8}}{\sqrt{8} + 1} \approx 1.35$. Since the minimum possible internal degree is $x=2$, the analytical inequality $\sqrt{x} \ge \sqrt{2} \approx 1.414 > R(8) \ge R(\Delta)$ is unequivocally satisfied.

Because the boundary condition $G^*(x, \Delta)$ remains strictly positive, $G^*(x,y) > 0$ universally holds across the entire non-basic domain.
\end{proof}

With the strict global positivity of the alternative gap function established for $3 \le \Delta \le 8$, the integer linear programming framework guarantees that the maximum $AG$ index is achieved by strictly minimizing the discrete penalty function over the integer domain. In the following corollaries, we apply this optimization constraint to explicitly resolve the exact topological structures of maximal trees for $\Delta = 3$ (chemical trees) and $\Delta = 4$. By evaluating the discrete penalty bounds alongside the Diophantine incidence identities, we rigorously characterize the required optimal non-basic edges.

\begin{corollary}[AG Index for Chemical Trees]
\label{cor:ag_chemical}
Let $T$ be a chemical tree ($\Delta = 3$) of order $n \ge 5$. The maximum value of the Arithmetic-Geometric ($AG$) index is uniquely determined by the alternative basis $\{(1, 3), (3, 3)\}$. Specifically:
\begin{itemize}
    \item If $n$ is even, the optimal tree consists strictly of $m_{1,3}$ and $m_{3,3}$ edges.
    \item If $n$ is odd, the optimal integer configuration includes exactly one degree-2 vertex, strictly introducing $m_{2,3} = 2$ alongside the basis edges.
\end{itemize}
\end{corollary}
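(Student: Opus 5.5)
The plan is to use the exact decomposition \eqref{eq:general_ti_decomposition}, but with the alternative basis of Lemma \ref{lem:alternative-f-condition} and $\Delta=3$. For every chemical tree $T$ of order $n$,
\[
AG(T) = (y_1+y_2^*)\,n - y_1 - \sum_{(i,j)\notin\{(1,3),(3,3)\}} G^*(i,j)\, m_{i,j},
\]
so maximizing $AG$ amounts to minimizing the discrete penalty $\mathcal{P}^*(T)=\sum G^*(i,j)m_{i,j}$. By Lemma \ref{lem:ag_alt_gap}, every coefficient $G^*(i,j)$ with $(i,j)\notin\{(1,3),(3,3)\}$ is strictly positive. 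Hence $\mathcal{P}^*(T)\ge 0$, with equality iff $T$ uses only $(1,3)$ and $(3,3)$ edges.

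First I compute the relevant numbers at $\Delta=3$. The coefficient is $y_2^*=\tfrac32\bigl(\tfrac{2}{\sqrt3}-1\bigr)=\sqrt3-\tfrac32\approx 0.232$. The only non-basic pairs that can occur in a tree with $n\ge 5$ are $(1,2),(2,2),(2,3)$; the pair $(1,1)$ occurs only in $K_2$. From the formula for $G^*$ these give
\[
G^*(2,3)=1-\tfrac{5}{2\sqrt6}+\tfrac{y_2^*}{6}\approx 0.018,\qquad G^*(1,2)\approx 0.133,\qquad G^*(2,2)=\tfrac{y_2^*}{3}\approx 0.077 .
\]
The key observation is that $G^*(2,3)$ is the smallest of the three, and that $G^*(2,2)>2G^*(2,3)$.

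Next comes the parity argument. Subtracting \eqref{eq:tree-sum-ni} from \eqref{eq:tree-sum-deg} gives $n_2+2n_3=n-2$, so $n_2\equiv n \pmod 2$.

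If $n$ is even, the class $\mathcal{T}^3_{\mathrm{opt}}((n-2)/2)$ is nonempty, and it has $\mathcal{P}^*=0$. Any tree with a degree-$2$ vertex has a non-basic edge and therefore positive penalty. This proves the first item, including uniqueness of the edge profile.

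If $n$ is odd, then $n_2\ge 1$. I bound the penalty by charging it to degree-$2$ vertices. Each such vertex has two incident edge-ends, and every non-basic edge is incident to $V_2$. An edge of type $(1,2)$ or $(2,3)$ carries one end and costs at least $G^*(2,3)$. An edge of type $(2,2)$ carries two ends and costs $G^*(2,2)>2G^*(2,3)$. Therefore
\[
\mathcal{P}^*(T)\ge 2n_2\,G^*(2,3)\ge 2G^*(2,3).
\]
Equality holds iff $n_2=1$ and both edges at the degree-$2$ vertex are of type $(2,3)$, that is, $m_{2,3}=2$ and all other edges are basic. For realizability, I subdivide one $(3,3)$ edge of a tree in $\mathcal{T}^3_{\mathrm{opt}}(p)$ with $p\ge 2$, which gives order $2p+3$, so every odd $n\ge 7$ is attained. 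Every minimizer has exactly this form: suppressing its unique degree-$2$ vertex yields a tree in $\mathcal{T}^3_{\mathrm{opt}}(p)$.

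The main obstacle I foresee is the boundary case $n=5$. There $n_2+2n_3=3$ forces $n_3=1$ and $n_2=1$, so the degree-$2$ vertex cannot have two neighbours of degree $3$. The only tree is the spider with one $(2,3)$ edge and one $(1,2)$ edge. I will therefore check $n=5$ separately and either state it as an exception or read the odd case as meaning $n\ge 7$. The other delicate point is confirming $G^*(2,3)>0$ and $G^*(2,2)>2G^*(2,3)$ exactly rather than numerically. For the first, I will square out $1+\tfrac{y_2^*}{6}>\tfrac{5}{2\sqrt6}$ to remove the radicals. The second reduces to $\tfrac{y_2^*}{3}-\tfrac{y_2^*}{3}>2-\tfrac{5}{\sqrt6}$, i.e. $\tfrac{5}{\sqrt6}>2$, which holds since $25>24$.
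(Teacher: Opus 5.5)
Your proposal is correct and follows essentially the same route as the paper. Both use the decomposition with the dual variables of the alternative basis, the parity relation $n_2\equiv n \pmod 2$, and the charging bound $\mathcal{P}^*(T)\ge 2n_2G^*(2,3)$ obtained from $G^*(1,2)>G^*(2,3)$ and $G^*(2,2)>2G^*(2,3)$. Your caveat about $n=5$ is justified: the paper does not address that case, and the unique tree there has $m_{1,2}=m_{2,3}=1$, so the odd-case description fails. The paper's shifted value $m_{3,3}=\frac{n-5}{2}$ should also read $\frac{n-7}{2}$, which matches your subdivision construction realizing exactly the odd orders $n\ge 7$.
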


\begin{proof}
By Lemma \ref{lem:alternative-f-condition} and Lemma \ref{lem:ag_alt_gap}, the LP maximization for $\Delta=3$ maps to the alternative basis $\mathcal{B} = \{(1,3), (3,3)\}$. To construct the continuous upper bound, we evaluate the dual variables. From Lemma \ref{lem:ag_alt_gap}, the penalty coefficient for $\Delta=3$ is $y_2^*(3) = \frac{3}{2}\big(\frac{4}{2\sqrt{3}} - 1\big) = \sqrt{3} - \frac{3}{2}$. 

The binding dual constraint for the basis edge $(3,3)$ dictates $y_1^* + \frac{2}{3}y_2^* = f(3,3)$. Since $f(3,3) = 1$, isolating the first dual variable yields $y_1^* = 1 - \frac{2}{3}(\sqrt{3} - \frac{3}{2}) = 2 - \frac{2}{\sqrt{3}}$.

Applying the decomposition from Eq. \eqref{eq:general_ti_decomposition}, the parameters resolve to $A = y_1^* + y_2^* = \frac{1}{\sqrt{3}} + \frac{1}{2}$ and $B = -y_1^* = \frac{2}{\sqrt{3}} - 2$. Consequently, the objective function reduces to:
\begin{equation}
AG(T) = \left( \frac{1}{\sqrt{3}} + \frac{1}{2} \right)n + \left( \frac{2}{\sqrt{3}} - 2 \right) - \sum_{(i,j) \notin \{(1,3), (3,3)\}} G^*(i,j)\, m_{i,j}.
\end{equation}

If $n$ is even, the LP solutions $m_{1,3} = \frac{n+2}{2}$ and $m_{3,3} = \frac{n-4}{2}$ are integers. Setting all non-basic $m_{i,j} = 0$ results in a zero penalty, defining the optimal tree.

If $n$ is odd, a tree restricted to degrees 1 and 3 cannot exist. The graph must contain at least one vertex of degree 2 ($n_2 \ge 1$). From the incidence relation \eqref{eq:tree-ni-2}, the degree-2 edges satisfy $m_{1,2} + 2m_{2,2} + m_{2,3} = 2n_2$. Multiplying this identity by $G^*(2,3) \approx 0.018$ gives:
\begin{equation}
2n_2 G^*(2,3) = G^*(2,3)m_{1,2} + 2G^*(2,3)m_{2,2} + G^*(2,3)m_{2,3}.
\end{equation}

Explicit evaluation of the non-basic gap penalties confirms the inequalities $G^*(1,2) \approx 0.133 > G^*(2,3)$ and $G^*(2,2) \approx 0.077 > 2G^*(2,3) \approx 0.036$. This establishes a lower bound for the total penalty function:
\begin{align}
\sum_{(i,j) \notin \{(1,3), (3,3)\}} G^*(i,j) m_{i,j} &= G^*(1,2)m_{1,2} + G^*(2,2)m_{2,2} + G^*(2,3)m_{2,3} \nonumber \\
&\ge G^*(2,3)m_{1,2} + 2G^*(2,3)m_{2,2} + G^*(2,3)m_{2,3} \nonumber \\
&= 2n_2 G^*(2,3).
\end{align}

Since $n_2 \ge 1$, the minimum penalty is bounded below by $2G^*(2,3)$. This minimum is attained if and only if $n_2 = 1$, $m_{2,3} = 2$, and $m_{1,2} = m_{2,2} = 0$. Substituting $m_{2,3} = 2$ into the LP constraints shifts the basic variables to $m_{1,3} = \frac{n+1}{2}$ and $m_{3,3} = \frac{n-5}{2}$, generating a valid integer configuration.
\end{proof}

\begin{corollary}[AG Index for $\Delta=4$]
\label{cor:ag_delta4}
Let $T$ be a tree of order $n \ge 6$ with maximum degree $\Delta = 4$. The maximum value of the Arithmetic-Geometric ($AG$) index is uniquely determined by the alternative basis $\{(1, 4), (4, 4)\}$. Specifically:
\begin{itemize}
    \item If $n \equiv 2 \pmod 3$, the optimal tree consists solely of $m_{1,4}$ and $m_{4,4}$ edges.
    \item If $n \equiv 1 \pmod 3$, the optimal configuration includes exactly one degree-3 vertex, introducing $m_{3,4} = 3$ alongside the basis edges.
    \item If $n \equiv 0 \pmod 3$, the optimal configuration includes exactly one degree-2 vertex, introducing $m_{2,4} = 2$ alongside the basis edges.
\end{itemize}
\end{corollary}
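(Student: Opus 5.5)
We follow the template of Corollary \ref{cor:ag_chemical}. By Lemma \ref{lem:alternative-f-condition} and Lemma \ref{lem:ag_alt_gap} with $\Delta=4$, the basis $\{(1,4),(4,4)\}$ is LP-optimal with a strictly positive alternative gap $G^*$ on every non-basic pair. Here $y_2^*(4)=\frac{4}{3}\big(\frac{5}{4}-1\big)=\frac13$ and $y_1^*=f(4,4)-\frac12 y_2^*=\frac56$. Substituting into \eqref{eq:general_ti_decomposition} gives
\[
AG(T)=\tfrac{7}{6}n-\tfrac56-\sum_{(i,j)\notin\{(1,4),(4,4)\}}G^*(i,j)\,m_{i,j}.
\]
Maximizing $AG$ is therefore the same as minimizing the penalty $\mathcal P(T)$ over integer-realizable trees. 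Next we tabulate $G^*(i,j)=1-\frac{i+j}{2\sqrt{ij}}+\frac13\big(\frac1i+\frac1j-\frac12\big)$ for the non-basic pairs $(1,2),(1,3),(2,2),(2,3),(3,3),(2,4),(3,4)$. Direct evaluation gives roughly $G^*(3,4)=1-\frac{7}{4\sqrt3}+\frac1{36}\approx 0.0174$, $G^*(2,4)=1-\frac{3}{2\sqrt2}+\frac1{12}\approx 0.0227$, $G^*(3,3)=\frac1{18}$, and $G^*(2,2)=\frac16$, with the pairs involving a leaf being larger still.

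The congruence comes from $\sum_i (i-2)n_i=-2$, i.e. $n_1=2+n_3+2n_4$, together with $n=n_1+n_2+n_3+n_4$. These give $n-2=3n_4+2n_3+n_2$, so $n_2+2n_3\equiv n-2\pmod 3$.

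\begin{itemize}
\item If $n\equiv2\pmod 3$, the class $\mathcal T^4_{\mathrm{opt}}(p)$ realizes zero penalty. Any tree containing a degree-2 or degree-3 vertex has an incident non-basic edge and hence positive penalty, so $\mathcal T^4_{\mathrm{opt}}(p)$ is the unique optimum.
\item For the other residues we bound the penalty per vertex, as in the chemical case. Each degree-3 vertex has three incident edges, each with gap at least $G^*(3,4)$ (pairs $(3,3)$ are shared by two degree-3 vertices and $G^*(3,3)\ge 2G^*(3,4)$). Each degree-2 vertex has two incident edges, each with gap at least $G^*(2,4)$; mixed edges such as $(2,3)$ must be charged to both endpoints, so we check that $G^*(2,3)\ge G^*(2,4)+G^*(3,4)$ and similarly for pairs involving leaves. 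The result is
\[
\mathcal P(T)\ \ge\ 2G^*(2,4)\,n_2+3G^*(3,4)\,n_3,
\]
with equality iff every edge at a degree-2 or degree-3 vertex goes to a degree-4 vertex.
\end{itemize}

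It remains to minimize $\phi(n_2,n_3)=2a\,n_2+3b\,n_3$, where $a=G^*(2,4)\approx0.0227$ and $b=G^*(3,4)\approx0.0174$, over nonnegative integers with $n_2+2n_3\equiv n-2\pmod 3$.

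\begin{itemize}
\item For $n\equiv1\pmod3$ we need $n_2+2n_3\equiv2$. The candidates are $(0,1)$ with cost $3b\approx0.052$, and $(2,0)$ with cost $4a\approx0.091$. Other feasible pairs cost more, since each increase adds at least $2a$ or $3b$ and moves away from the minimal representatives. Hence exactly one degree-3 vertex with $m_{3,4}=3$.
\item For $n\equiv0\pmod3$ we need $n_2+2n_3\equiv1$. The candidates are $(1,0)$ with cost $2a\approx0.045$, and $(0,2)$ with cost $6b\approx0.104$. Hence exactly one degree-2 vertex with $m_{2,4}=2$.
\end{itemize}

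In each case we then check realizability by solving the LP constraints for the shifted $m_{1,4},m_{4,4}$, taking them nonnegative for $n\ge6$, and attaching the single vertex between two degree-4 vertices. We expect the main obstacle to be the rigorous per-vertex charging inequality: the edge $(2,3)$ and the leaf pairs must dominate the sum of the endpoint charges. This needs careful numerical verification of the small gap values, which are close to each other.
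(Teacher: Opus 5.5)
Your reduction is the paper's: the same dual values $y_2^*=\frac13$, $y_1^*=\frac56$, the decomposition $AG(T)=\frac76 n-\frac56-\mathcal{P}(T)$, the congruence $n_2+2n_3\equiv n-2 \pmod 3$, and the comparisons $(0,1)$ versus $(2,0)$ and $(1,0)$ versus $(0,2)$. Your single charging inequality $\mathcal{P}(T)\ge 2G^*(2,4)\,n_2+3G^*(3,4)\,n_3$ is actually cleaner than the paper's argument. The paper uses two separate bounds, which both contain the $m_{2,3}$ term and so cannot simply be added, and it never excludes non-minimal pairs. Your check $G^*(2,3)\approx 0.090> G^*(2,4)+G^*(3,4)\approx 0.040$, together with monotonicity of $2an_2+3bn_3$, settles both points.

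\textbf{The gap is the final realizability claim.} Attaining the bound $2a$ (resp.\ $3b$) requires the degree-2 vertex to have two degree-4 neighbours, and the degree-3 vertex to have three (not two, as you wrote). That means $n_4\ge 2$ (resp.\ $n_4\ge 3$), whereas $n_4=\frac{n-3}{3}$ (resp.\ $\frac{n-4}{3}$).

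Equivalently, the shifted basic solutions are
\begin{itemize}
\item $m_{1,4}=\frac{2n}{3}$ and $m_{4,4}=\frac{n-9}{3}$ in the case $n\equiv 0 \pmod 3$;
\item $m_{1,4}=\frac{2n+1}{3}$ and $m_{4,4}=\frac{n-13}{3}$ in the case $n\equiv 1 \pmod 3$.
\end{itemize}
So $m_{4,4}<0$ for $n=6$ and for $n\in\{7,10\}$, and your assertion that these are nonnegative for $n\ge 6$ is false.

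For these orders the lower bound is not attained, and the conclusion itself changes:
\begin{itemize}
\item For $n=6$, the only tree with $\Delta=4$ has $m_{2,4}=m_{1,2}=1$.
\item For $n=7$, the optimum has $m_{3,4}=1$ and $m_{1,3}=2$ (penalty $\approx 0.264$).
\item For $n=10$, the optimum has the degree-3 vertex adjacent to both degree-4 vertices and to one leaf, so $m_{3,4}=2$ and $m_{1,3}=1$. Its penalty is $\approx 0.158$, versus $\approx 0.212$ for the best tree with $(n_2,n_3)=(2,0)$.
\end{itemize}
Hence the stated values $m_{2,4}=2$ and $m_{3,4}=3$ are wrong for these $n$; only ``exactly one vertex of degree 2 (resp.\ 3)'' survives.

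The paper's proof has the same hole: it asserts ``a valid integer configuration'' without checking $m_{4,4}\ge 0$. So your argument, like the paper's, proves the statement only for $n\ge 9$ when $n\equiv 0 \pmod 3$ and for $n\ge 13$ when $n\equiv 1 \pmod 3$. The small orders must be treated separately, and the statement corrected for them.
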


\begin{proof}
By Lemma \ref{lem:alternative-f-condition}, the LP maximization for $\Delta=4$ maps to the alternative basis $\mathcal{B} = \{(1,4), (4,4)\}$. To construct the continuous upper bound, we evaluate the dual variables. The penalty coefficient evaluates to $y_2^*(4) = \frac{4}{3}(\frac{5}{4} - 1) = \frac{1}{3}$. 

The dual constraint for the basis edge $(4,4)$ dictates $y_1^* + \frac{2}{4}y_2^* = f(4,4)$. Since $f(4,4) = 1$, isolating $y_1^*$ yields $y_1^* = 1 - \frac{1}{2}(\frac{1}{3}) = \frac{5}{6}$.

Applying the decomposition from Eq. \eqref{eq:general_ti_decomposition}, the parameters resolve to $A = y_1^* + y_2^* = \frac{5}{6} + \frac{1}{3} = \frac{7}{6}$ and $B = -y_1^* = -\frac{5}{6}$. The objective function reduces to:
\begin{equation}
AG(T) = \frac{7}{6}n - \frac{5}{6} - \sum_{(i,j) \notin \{(1,4), (4,4)\}} G^*(i,j)\, m_{i,j}.
\end{equation}

The LP solutions are $m_{1,4} = \frac{2n+2}{3}$ and $m_{4,4} = \frac{n-5}{3}$. 

If $n \equiv 2 \pmod 3$, these solutions are integers. Setting all non-basic $m_{i,j} = 0$ results in a zero penalty, defining the optimal tree.

If $n \not\equiv 2 \pmod 3$, a tree restricted to degrees 1 and 4 cannot exist. The graph must contain internal vertices of degree $k \in \{2,3\}$. Evaluation of the non-basic gap penalties yields: $G^*(3,4) \approx 0.017$, $G^*(2,4) \approx 0.023$, $G^*(3,3) \approx 0.055$, $G^*(2,3) \approx 0.090$, $G^*(1,3) \approx 0.123$, $G^*(2,2) \approx 0.166$, and $G^*(1,2) \approx 0.272$. 

To minimize the penalty, we analyze the lower bound required to introduce these vertices using the incidence identities \eqref{eq:tree-ni-i}.

For any degree-3 vertex ($n_3 \ge 1$), the identity $m_{1,3} + m_{2,3} + 2m_{3,3} + m_{3,4} = 3n_3$ holds. Since $G^*(1,3) > G^*(3,4)$, $G^*(2,3) > G^*(3,4)$, and $G^*(3,3) \approx 0.055 > 2G^*(3,4) \approx 0.034$, the penalty associated with degree-3 vertices is bounded below:
\begin{align}
\sum_{(i,3) \text{ or } (3,j)} G^*(i,j) m_{i,j} &= G^*(1,3)m_{1,3} + G^*(2,3)m_{2,3} + G^*(3,3)m_{3,3} + G^*(3,4)m_{3,4} \nonumber \\
&\ge G^*(3,4)m_{1,3} + G^*(3,4)m_{2,3} + 2G^*(3,4)m_{3,3} + G^*(3,4)m_{3,4} \nonumber \\
&= 3n_3 G^*(3,4).
\end{align}

Similarly, for any degree-2 vertex ($n_2 \ge 1$), the identity $m_{1,2} + 2m_{2,2} + m_{2,3} + m_{2,4} = 2n_2$ holds. Since $G^*(1,2) > G^*(2,4)$, $G^*(2,3) > G^*(2,4)$, and $G^*(2,2) \approx 0.166 > 2G^*(2,4) \approx 0.046$, the penalty contribution is bounded below:
\begin{align}
\sum_{(i,2) \text{ or } (2,j)} G^*(i,j) m_{i,j} &= G^*(1,2)m_{1,2} + G^*(2,2)m_{2,2} + G^*(2,3)m_{2,3} + G^*(2,4)m_{2,4} \nonumber \\
&\ge G^*(2,4)m_{1,2} + 2G^*(2,4)m_{2,2} + G^*(2,4)m_{2,3} + G^*(2,4)m_{2,4} \nonumber \\
&= 2n_2 G^*(2,4).
\end{align}

To determine which non-basic vertices must be introduced, we subtract the vertex sum identity ($n_1 + n_2 + n_3 + n_4 = n$) from the degree sum identity ($n_1 + 2n_2 + 3n_3 + 4n_4 = 2n-2$) to isolate $3n_4 = n - 2 - n_2 - 2n_3$. This yields the modular congruence:
\begin{equation}
\label{eq:diophantine-mod3}
n_2 + 2n_3 \equiv n - 2 \pmod 3.
\end{equation}

\textbf{Case $n \equiv 1 \pmod 3$:} 
Constraint \eqref{eq:diophantine-mod3} evaluates to $n_2 + 2n_3 \equiv 2 \pmod 3$. The minimal integer pairs satisfying this congruence are $(0, 1)$ and $(2, 0)$. 
If $(n_2, n_3) = (2, 0)$, the penalty is bounded below by $2 \times (2G^*(2,4)) \approx 0.092$.
If $(n_2, n_3) = (0, 1)$, the penalty is bounded below by $1 \times (3G^*(3,4)) \approx 0.051$.
Since $0.051 < 0.092$, the minimum penalty is attained by setting $n_3 = 1$ and $n_2 = 0$. Substituting $m_{3,4} = 3$ into the LP constraints shifts the basic variables to $m_{1,4} = \frac{2n+1}{3}$, generating a valid integer configuration.

\textbf{Case $n \equiv 0 \pmod 3$:} 
Constraint \eqref{eq:diophantine-mod3} evaluates to $n_2 + 2n_3 \equiv 1 \pmod 3$. The minimal integer pairs satisfying this relation are $(1, 0)$ and $(0, 2)$.
If $(n_2, n_3) = (0, 2)$, the lower penalty bound is $2 \times (3G^*(3,4)) \approx 0.102$.
If $(n_2, n_3) = (1, 0)$, the lower penalty bound is $1 \times (2G^*(2,4)) \approx 0.046$.
Since $0.046 < 0.102$, the minimum penalty is attained by setting $n_2 = 1$ and $n_3 = 0$. Substituting $m_{2,4} = 2$ resolves the basic constraints to $m_{1,4} = \frac{2n}{3}$, defining the minimal discrete configuration.
\end{proof}

\section{The Degree-Dependent Basis $\{(1, \Delta), (k, \Delta)\}$ for the $SO_{11}$ Index}
\label{sec:degree_dependent_basis_so11}

As established, neither the standard basis $\{(1, \Delta), (2, \Delta)\}$ nor the alternative basis $\{(1, \Delta), (\Delta, \Delta)\}$ universally maximizes the $SO_{11}$ index. Specifically, the standard basis fails for $\Delta \ge 9$. This limitation necessitates a degree-dependent optimal basis parameterized by $\{(1, \Delta), (k, \Delta)\}$. In this section, we formally derive the exact integer degree $k$ that uniquely maximizes the $SO_{11}$ index as a function of $\Delta$.

\begin{theorem}[Degree-Dependent Optimal Basis]
\label{thm:degree_dependent_k}
Let $T$ be a tree of order $n$ and maximum degree $\Delta \ge 3$. The linear program maximizing the $SO_{11}$ index admits an optimal basic feasible solution generated by the basis $\{(1, \Delta), (k, \Delta)\}$, where the optimal integer degree $k$ is uniquely determined by maximizing the characteristic polynomial:
\begin{equation}
M(k) = -k^3 - (\Delta+1)k^2 + (\Delta^2-\Delta-1)k.
\end{equation}
This integer solution is strictly bounded to $k \in \{\lfloor x^* \rfloor, \lceil x^* \rceil\}$, where $x^*$ is the continuous critical point evaluated as the positive root:
\begin{equation}
x^* = \frac{\sqrt{4\Delta^2 - \Delta - 2} - (\Delta + 1)}{3}.
\end{equation}
\end{theorem}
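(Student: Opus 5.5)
We view the LP of \eqref{eq:LP-objective-g}--\eqref{eq:LP-constraint-w-SO} geometrically. Divide both constraints by $n-1$. Each edge type $(i,j)$ then becomes a point $(w_{ij}, f(i,j))$ with $w_{ij}=\frac1i+\frac1j$. The normalized LP asks for the highest point of the upper concave envelope of these points above the abscissa $w_0=\frac{n}{n-1}$, which lies slightly above $1$.

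A basic feasible solution has two basic columns, one with $w>1$ and one with $w\le 1$. The columns with $w>1$ are exactly the leaf edges $(1,j)$, with $w=1+\frac1j\ge 1+\frac1\Delta$. Since $\partial f/\partial y = 3y^2+2xy-x^2>0$, $f(1,j)$ increases in $j$ while $w_{1j}$ decreases. Hence $(1,\Delta)$ dominates every other leaf column, and we fix it as the first basic variable. The optimal partner $(i,j)$ with $w_{ij}\le 1$ is then the one whose chord from $(1+\frac1\Delta, f(1,\Delta))$ has the smallest slope
\[
s(i,j)=\frac{f(1,\Delta)-f(i,j)}{1+\frac1\Delta-\frac1i-\frac1j}.
\]
This slope is exactly the dual variable $y_2$ of Lemma \ref{lem:general-f-condition}, written for a general partner. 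Minimizing it is equivalent to the dual gaps $G(i,j)$ of all competing columns being nonnegative.

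Next we restrict to partners of the form $(k,\Delta)$. The numerator factors as
\[
f(1,\Delta)-f(k,\Delta)=(k-1)\bigl(k^2+k+1+\Delta(k+1)-\Delta^2\bigr),
\]
and the denominator equals $\frac{k-1}{k}$. Therefore
\[
s(k,\Delta)=k^3+(\Delta+1)k^2-(\Delta^2-\Delta-1)k=-M(k),
\]
so minimizing $y_2$ over this family is the same as maximizing $M(k)$. We have $M''(k)=-6k-2(\Delta+1)<0$ for $k>0$, so $M$ is strictly concave there. Solving $M'(k)=-3k^2-2(\Delta+1)k+(\Delta^2-\Delta-1)=0$ gives discriminant $(\Delta+1)^2+3(\Delta^2-\Delta-1)=4\Delta^2-\Delta-2$, whose positive root is the stated $x^*$. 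By concavity the integer maximizer lies in $\{\lfloor x^*\rfloor,\lceil x^*\rceil\}$. We must also check that it lies in $[2,\Delta]$; this holds because $x^*\approx \Delta/3$ and $x^*\ge 1$ for $\Delta\ge 3$. The case $\Delta\le 8$ giving $k=2$ is consistent with Corollary \ref{cor:so11}.

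The main obstacle is showing that partners $(i,j)$ with $j<\Delta$, including diagonal pairs $(i,i)$, do not give a smaller slope than the best $(k,\Delta)$. Lemma \ref{lem:reduction} cannot be quoted, because $y_2=-M(k)$ is negative for $\Delta\ge 5$. Instead we prove $G(i,j)\ge G(i,\Delta)$ directly from
\[
\partial G/\partial y=-(3y^2+2xy-x^2)-y_2/y^2,
\]
where $y_2$ is the slope of the chosen basis.

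We first try to bound $|y_2|=M(k)\le M(x^*)=O(\Delta^3)$. The aim is to show that $3y^2+2xy-x^2$ dominates $|y_2|/y^2$ in the relevant range $y> x\ge 2$, $y\ge x^*$. If this crude bound fails for small $y$, we handle those $(i,j)$ by comparing $s(i,j)$ with $s(i,\Delta)$ directly as cubic expressions. With the reduction to $j=\Delta$ done, $G(i,\Delta)\ge 0$ is equivalent to $M(i)\le M(k)$, which holds by the choice of $k$.
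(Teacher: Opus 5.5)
Your central computation matches the paper's. The identity $s(k,\Delta)=-M(k)$ is the paper's $y_2=-M(k)$, and your equivalence "$G(i,\Delta)\ge0\iff M(i)\le M(k)$" is its identity $G_k^*(x,\Delta)=\frac{x-1}{x}\bigl[M(k)-M(x)\bigr]$. You are also right that this only controls the columns $(x,\Delta)$, and that Lemma \ref{lem:reduction} is unavailable once $y_2<0$. The paper's proof simply asserts that checking $G_k^*(x,\Delta)$ suffices.

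\textbf{The reduction to $j=\Delta$ is not carried out.} This step is the crux. Your crude derivative bound does fail for small $y$: at $(x,y)=(2,3)$ one has $\partial G/\partial y=M(k)/9-35$. This is positive already for $\Delta=13$, where $k=4$ and $M(4)=332$, so $G(2,\cdot)$ is not monotone. Everything therefore rests on the fallback, which you only name. It works, and the clean form compares gaps by a finite difference rather than a derivative: for $1\le i\le j<\Delta$,
\[
G(i,j)-G(i,\Delta)=(\Delta-j)\Bigl[\Delta^2+\Delta j+j^2+i(\Delta+j-i)-\frac{M(k)}{j\Delta}\Bigr].
\]
For $x>0$ we have $M(x)<\frac{(\Delta^2-\Delta-1)^2}{4(\Delta+1)}<\frac{\Delta^3}{4}$, so the bracket exceeds $\frac34\Delta^2$. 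This identity also covers the diagonal columns, since the cubic vanishes at $x=y$, matching $f(x,x)=0$.

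\textbf{The leaf-column step needs the same identity.} You argue that $(1,\Delta)$ dominates $(1,j)$ because $f(1,j)$ increases while $w_{1j}$ decreases. That argument is valid only when the optimal chord slope is nonnegative. For $\Delta\ge5$ the slope equals $-M(k)<0$, so the dual line falls to the right of $w_{1\Delta}$ and $G(1,j)\ge0$ needs an explicit check. The identity above with $i=1$, where $G(1,\Delta)=0$, supplies it.

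\textbf{The claim $x^*\ge1$ for $\Delta\ge3$ is false.} One gets $x^*\approx0.52$ for $\Delta=3$ and $x^*\approx0.87$ for $\Delta=4$. There $\{\lfloor x^*\rfloor,\lceil x^*\rceil\}=\{0,1\}$, which misses the correct value $k=2$ from Corollary \ref{cor:so11}. For $5\le\Delta\le7$ the unrestricted integer maximizer of $M$ can be the meaningless value $k=1$; for $\Delta=5$, $M(1)=12>M(2)=6$.

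The maximization must be over $k\in\{2,\dots,\Delta\}$. By strict concavity, the maximizer is $k=2$ when $x^*\le2$, which is equivalent to $\Delta\le7$. It lies in $\{\lfloor x^*\rfloor,\lceil x^*\rceil\}$ only when $x^*>2$, and then $\lceil x^*\rceil\le\Delta$ because $x^*<(\Delta-1)/3$. So the floor/ceiling localization in the statement is literally wrong for $\Delta\in\{3,4\}$. Your proof should state and prove this clamped version instead.
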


\begin{proof}
To establish the global optimality of the basis $\{(1, \Delta), (k, \Delta)\}$, we evaluate the dual variables $y_1$ and $y_2$. By strict complementary slackness:
\begin{align}
y_1 + \left(1 + \frac{1}{\Delta}\right)y_2 &= f(1, \Delta), \label{eq:dual_dyn1}\\
y_1 + \left(\frac{1}{k} + \frac{1}{\Delta}\right)y_2 &= f(k, \Delta). \label{eq:dual_dyn2}
\end{align}
Subtracting \eqref{eq:dual_dyn2} from \eqref{eq:dual_dyn1} isolates the dual penalty coefficient:
\begin{equation*}
\left(1 - \frac{1}{k}\right)y_2 = f(1, \Delta) - f(k, \Delta) \implies y_2 = \frac{k}{k-1} \big( f(1, \Delta) - f(k, \Delta) \big).
\end{equation*}
We define the characteristic function $M(k) = -y_2 = \frac{k}{k-1} \big( f(k, \Delta) - f(1, \Delta) \big)$. 
For the $SO_{11}$ index, $f(x,y) = y^3 + y^2 x - x^2 y - x^3$. The algebraic difference evaluates to:
\begin{align*}
f(k, \Delta) - f(1, \Delta) &= (\Delta^3 + k\Delta^2 - k^2\Delta - k^3) - (\Delta^3 + \Delta^2 - \Delta - 1) \\
&= (k-1)\Delta^2 - (k^2-1)\Delta - (k^3-1).
\end{align*}
Factoring out the term $(k-1)$ cleanly cancels the denominator in the definition of $M(k)$, yielding the cubic polynomial:
\begin{equation*}
M(k) = -k^3 - (\Delta+1)k^2 + (\Delta^2-\Delta-1)k.
\end{equation*}

The basis is strictly optimal if and only if the dual gap satisfies $G_k^*(x, \Delta) \ge 0$ for all valid degrees $x$. The gap function is defined as:
\begin{equation*}
G_k^*(x, \Delta) = f(1, \Delta) - f(x, \Delta) - y_2 \left( 1 - \frac{1}{x} \right).
\end{equation*}
Substituting $y_2 = -M(k)$ and factoring out $\frac{x-1}{x}$ yields:
\begin{align*}
G_k^*(x, \Delta) &= f(1, \Delta) - f(x, \Delta) + M(k) \left( \frac{x-1}{x} \right) \\
&= \frac{x-1}{x} \left[ \frac{x}{x-1} \big( f(1, \Delta) - f(x, \Delta) \big) + M(k) \right] \\
&= \frac{x-1}{x} \big[ M(k) - M(x) \big].
\end{align*}
For any valid degree $x > 1$, the fractional prefix $\frac{x-1}{x}$ is strictly positive. Therefore, the condition $G_k^*(x, \Delta) \ge 0$ inherently holds if and only if $M(k) \ge M(x)$ for all integer alternatives $x$. This is satisfied by selecting the optimal integer $k$ as the maximum of the polynomial $M$.

To determine this maximum analytically, we set the derivative of the continuous extension to zero, $M'(x) = -3x^2 - 2(\Delta+1)x + (\Delta^2-\Delta-1) = 0$. The global maximum corresponds to the positive root:
\begin{equation*}
x^* = \frac{\sqrt{4\Delta^2 - \Delta - 2} - (\Delta + 1)}{3}.
\end{equation*}
Since the optimal degree $k$ must be an integer, it is strictly bounded to the nearest integers $k \in \{\lfloor x^* \rfloor, \lceil x^* \rceil\}$.
\end{proof}

\begin{corollary}[Asymptotic Scaling and Basis Transition for $SO_{11}$]
The optimal basis for the $SO_{11}$ index strictly transitions from $k=2$ to $k=3$ at $\Delta = 9$. Asymptotically, as $\Delta \to \infty$, the optimal integer degree scales proportionally to $\Delta/3$.
\end{corollary}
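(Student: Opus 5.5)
We work directly with the characteristic cubic $M(k) = -k^3 - (\Delta+1)k^2 + (\Delta^2-\Delta-1)k$ from Theorem \ref{thm:degree_dependent_k}. By that theorem, the optimal internal degree is the integer $k \ge 2$ that maximizes $M$, and it lies in $\{\lfloor x^* \rfloor, \lceil x^* \rceil\}$. Since $M$ is concave on $x>0$ (because $M''(x) = -6x - 2(\Delta+1) < 0$), comparing neighbouring integers is enough. The integer $k=2$ is optimal exactly when $M(2) > M(3)$, and $k=3$ is preferred once $M(3) > M(2)$ and also $M(3) \ge M(4)$.

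\textbf{Step 1 (transition).} We compute the forward difference
\[
M(3) - M(2) = -19 - 5(\Delta+1) + (\Delta^2 - \Delta - 1) = \Delta^2 - 6\Delta - 25 .
\]
This is negative for $\Delta \le 8$, since $64 - 48 - 25 = -9$, and positive for $\Delta \ge 9$, since $81 - 54 - 25 = 2$. This matches the sharpness remark $G(3,9) = -\tfrac{4}{3}$. We can cross-check with the gap identity $G_k^*(x,\Delta) = \frac{x-1}{x}\,[M(k) - M(x)]$: at $k=2$, $x=3$, $\Delta=9$ it gives $\tfrac{2}{3}\cdot(-2) = -\tfrac{4}{3}$, consistent with the paper. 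To confirm that $k=3$ is the winner at $\Delta = 9$ and not $4$, we evaluate
\[
M(4) - M(3) = -37 - 7(\Delta+1) + (\Delta^2 - \Delta - 1) = \Delta^2 - 8\Delta - 45 ,
\]
which equals $-36 < 0$ at $\Delta = 9$. By concavity, $M$ then attains its integer maximum at $k=3$. Equivalently, $x^*(9) = (\sqrt{313} - 10)/3 \approx 2.56$, so $x^* \in (2,3)$, and the difference computation decides between the floor and the ceiling. The same two difference formulas also show that, for $\Delta \le 8$, the integer maximum sits at $k=2$, consistent with Corollary \ref{cor:so11}.

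\textbf{Step 2 (asymptotics).} We expand the critical point:
\[
\sqrt{4\Delta^2 - \Delta - 2} = 2\Delta\sqrt{1 - \tfrac{1}{4\Delta} - \tfrac{1}{2\Delta^2}} = 2\Delta - \tfrac{1}{4} + O(1/\Delta).
\]
This gives $x^* = \frac{\Delta}{3} - \frac{5}{12} + O(1/\Delta)$. Since the optimal $k$ satisfies $|k - x^*| \le 1$, we get $k/\Delta \to 1/3$, that is, $k \sim \Delta/3$.

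\textbf{Main obstacle.} The delicate point is that Theorem \ref{thm:degree_dependent_k} only places $k$ in $\{\lfloor x^*\rfloor, \lceil x^*\rceil\}$, which is a two-element set. To say the transition happens \emph{exactly} at $\Delta = 9$, we must show that $k=2$ is strictly optimal for every $\Delta \le 8$ and that $k=2$ ceases to be optimal at $\Delta = 9$. The sign change of the quadratic $\Delta^2 - 6\Delta - 25$ does exactly this: its positive root is $3 + \sqrt{34} \approx 8.83$, which lies strictly between $8$ and $9$. The asymptotic claim, by contrast, is routine.
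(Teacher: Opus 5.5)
Your proposal is correct and follows essentially the same route as the paper, which compares $M(2)$ and $M(3)$ at $\Delta=8$ and $\Delta=9$ (obtaining $66>57$ and $94<96$) and then computes $\lim_{\Delta\to\infty} x^*/\Delta = 1/3$. Your refinements go slightly beyond the paper's two-point evaluation: the closed-form difference $M(3)-M(2)=\Delta^2-6\Delta-25$ combined with concavity makes the threshold hold for every $3\le\Delta\le 8$, including small $\Delta$ where $x^*<2$; the check $M(4)-M(3)=-36$ at $\Delta=9$ confirms that $k=3$, not $k=4$, is the optimizer; and you also obtain the second-order term $x^*=\Delta/3-5/12+O(1/\Delta)$.
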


\begin{proof}
By Theorem \ref{thm:degree_dependent_k}, the optimal integer degree $k$ is determined by maximizing $M(k)$ near the critical point $x^*$. We evaluate the polynomial at the bounding integers to identify the transition threshold:
\begin{itemize}
    \item For $\Delta = 8$: $x^* = \frac{\sqrt{246}-9}{3} \approx 2.23$. Evaluating the bounding integers yields $M(2) = 66$ and $M(3) = 57$. Since $M(2) > M(3)$, the basis $k=2$ remains optimal.
    \item For $\Delta = 9$: $x^* = \frac{\sqrt{313}-10}{3} \approx 2.56$. Evaluating the bounding integers yields $M(2) = 94$ and $M(3) = 96$. Since $M(3) > M(2)$, the optimal basis strictly shifts to $k=3$.
\end{itemize}

To determine the asymptotic scaling, we evaluate the limit of the continuous critical point as $\Delta \to \infty$:
\begin{equation*}
\lim_{\Delta \to \infty} \frac{x^*}{\Delta} = \lim_{\Delta \to \infty} \frac{\sqrt{4\Delta^2 - \Delta - 2} - (\Delta + 1)}{3\Delta} = \frac{2 - 1}{3} = \frac{1}{3}.
\end{equation*}
Consequently, $x^* \sim \frac{\Delta}{3}$. This rigorously demonstrates that for sufficiently large maximum degrees, the optimal basis parameter strictly diverges from the boundaries $k=2$ and $k=\Delta$, scaling precisely as a one-third fraction of the maximum degree.
\end{proof}

An illustrative example of this structural transition is provided in Figure \ref{fig:optimal_so11_delta9}. By utilizing the universal generative class $\mathcal{T}(k, \Delta, q)$ established in Section \ref{sec:applications}, we visually construct the extremal configuration for $\Delta = 9$, where the optimal basis shifts to $k=3$. Setting the parameter $q=1$ generates the minimal non-trivial configuration $T \in \mathcal{T}(3, 9, 1)$ that maximizes the $SO_{11}$ index. By directly applying the derived structural formulas, the exact vertex distribution evaluates to:
\begin{align*}
n_3 &= q = 1, \\
n_9 &= q(k-1) + 1 = 1(2) + 1 = 3, \\
n_1 &= q(\Delta k - \Delta - k) + \Delta = 1(27 - 9 - 3) + 9 = 24.
\end{align*}
Summing these vertices confirms the tree order $n = 28$. As depicted, the single internal vertex of degree 3 connects to three maximum degree vertices, which together anchor the twenty-four leaves.

\begin{figure}[htpb]
    \centering
    \begin{tikzpicture}[
        v3/.style={rectangle, draw=black, fill=gray!30, inner sep=2.5pt, minimum size=6pt},
        v9/.style={circle, draw=black, fill=black, inner sep=1.5pt},
        leaf/.style={circle, draw=black, fill=white, inner sep=1pt},
        edge/.style={thick}
    ]
    
    \node[v3] (C) at (0,0) {};
    
    \node[v9] (A) at (90:1.5) {};
    \node[v9] (B) at (210:1.5) {};
    \node[v9] (Cnode) at (330:1.5) {};
    
    \draw[edge] (C) -- (A);
    \draw[edge] (C) -- (B);
    \draw[edge] (C) -- (Cnode);
    
    \foreach \angle/\name in {90/A, 210/B, 330/Cnode} {
        \foreach \i in {1,2,...,8} {
            \pgfmathsetmacro{\leafangle}{\angle - 52.5 + \i*15}
            \node[leaf] (L\name\i) at (\leafangle:3.2) {};
            \draw (\name) -- (L\name\i);
        }
    }
    
    \end{tikzpicture}
\caption{The minimal non-trivial extremal tree $T \in \mathcal{T}(3, 9, 1)$ of order $n=28$ with maximum degree $\Delta=9$. The internal structure is a star $K_{1,3}$ connecting the single degree-3 vertex in $V_3$ (square) to three degree-9 vertices in $V_9$ (solid circles). This exactly satisfies the optimal basis $\{(1,9), (3,9)\}$, uniquely maximizing the $SO_{11}$ index. Pendant leaves ($V_1$) are represented by hollow circles.}
\label{fig:optimal_so11_delta9}
\end{figure}

\section{Concluding Remarks}
\label{sec:conclusion}

In this paper, we established a unified linear programming approach that replaces index-specific combinatorial proofs with a deterministic algebraic methodology. By formulating the optimization over degree-pair multiplicities and exploiting primal-dual strict complementary slackness, we proved that maximizing a vertex-degree-based index over trees strictly reduces to minimizing a discrete dual penalty function.

We derived a continuous, single-variable test condition that guarantees the global optimality of the standard edge basis $\{(1, \Delta), (2, \Delta)\}$. This criterion systematically resolved the extremal trees for the Tangent, $SO_1$, $SO_2$, $\sigma$-irregularity, Albertson, and $AG$ indices.

Furthermore, the dual optimality conditions establish exact analytical thresholds dictating necessary basis shifts. By introducing the bipartite generative class $\mathcal{T}(k, \Delta, q)$, we explicitly characterized the topologies resulting from these transitions. This allowed us to analytically resolve the structural collapse into the alternative basis $\{(1, \Delta), (\Delta, \Delta)\}$ for the $AG$ index on small maximum degrees ($\Delta \in \{3, 4\}$). For the $SO_{11}$ index, we proved that while the standard basis remains optimal exclusively for $\Delta \le 8$, the configuration strictly transitions to a degree-dependent basis $\{(1, \Delta), (k, \Delta)\}$ for $\Delta \ge 9$, where the optimal internal degree $k$ scales asymptotically as $\Delta/3$ based on the maximum of a derived cubic polynomial.

This framework provides an exact analytical pathway for any topological index whose extremal configuration parameterizes within $\mathcal{T}(k, \Delta, q)$ or its unipartite forms. Future research can systematically extend this dual penalty mechanism to graph classes beyond trees, such as unicyclic or bicyclic graphs with bounded maximum degrees.

\section*{Acknowledgements}
The research has been supported by the research project of the Ministry of Education, Science and Technological Development of the Republic of Serbia (No.~451-03-34/2026-03/200124).



\begin{thebibliography}{99}

\bibitem{abdo2018sigma}
H. Abdo, D. Dimitrov, I. Gutman, 
\textit{Graphs with maximal $\sigma$-irregularity}, 
Discrete Appl. Math. 250 (2018) 57--64.

\bibitem{albertson1997}
M. O. Albertson,
\textit{The irregularity of a graph},
Ars Combin. 46 (1997) 219--225.

\bibitem{ali2023solutions}
A. Ali, I. \v{Z}. Milovanovi\'{c}, A. M. Albalahi, A. M. Alanazi, A. E. Hamza,
\textit{Solutions to some open problems about four Sombor-index-like graph invariants}, 
MATCH Commun. Math. Comput. Chem. 90 (2023) 505--512.

\bibitem{basic2024filomat}
M. Ba\v{s}i\'{c}, 
\textit{Maximum values of the Sombor-index-like graph invariants of trees and connected graphs}, 
Filomat 38 (26) (2024) 9043--9053.

\bibitem{basic2026sigma_arxiv}
M. Ba\v{s}i\'{c},
\textit{Trees with maximum $\sigma$-irregularity under a prescribed maximum degree 6},
arXiv:2602.01262 (2026).

\bibitem{basic2026sigma_arxiv}
M. Ba\v{s}i\'{c}, 
\textit{On the maximum $\sigma$-irregularity of trees with given order and maximum degree}, 
arXiv:2602.14241 (2026).

\bibitem{basic2026filomat}
M. Ba\v{s}i\'{c},
\textit{Extremal results for the general reduced Second Zagreb index and Sombor-Type indices on trees}, 
Filomat 40 (13) (2026).


\bibitem{basic2026cmj}
M. Ba\v{s}i\'{c}, 
\textit{Extremal values of the Sombor-index-like invariant $SO_6$ for trees with given order and maximum degree}, 
Czechoslovak Math. J. (2026), \url{https://doi.org/10.21136/CMJ.2026.0199-26}.

\bibitem{ag_amc_reference}
K. C. Das, I. Gutman, B. Furtula, 
\textit{On the first geometric-arithmetic index of graphs}, 
Appl. Math. Comput. 159 (2011) 2030--2037.

\bibitem{gao2024extremal}
W. Gao,
\textit{Extremal trees and molecular trees with respect to the Sombor-index-like graph invariants $\mathcal{SO}_5$ and $\mathcal{SO}_6$}, 
Czechoslovak Math. J. 74 (2024) 927--941.

\bibitem{gutman2021sombor}
I. Gutman, 
\textit{Geometric approach to degree-based topological indices: Sombor indices},
MATCH Commun. Math. Comput. Chem. 86 (1) (2021) 11--16.

\bibitem{gutman2022back}
I. Gutman,
\textit{Sombor indices -- back to geometry},
Open J. Discrete Appl. Math. 5 (2) (2022) 1--5.

\bibitem{gutman2018inverse}
I. Gutman, M. Togan, A. Yurtta\c{s}, A. S. \c{C}evik, I. N. Cang\"{u}l, 
\textit{Inverse problem for sigma index}, 
MATCH Commun. Math. Comput. Chem. 79 (2018) 491--508.

\bibitem{knor2026complete}
M. Knor, J. Sedlar, R. \v{S}krekovski, 
\textit{A complete characterization of maximally $\sigma$-irregular trees with prescribed maximum degree}, 
arXiv:2608.07402 (2026).

\bibitem{kovijanic2024chemical}
\v{Z}. Kovijani\'{c} Vuki\'{c}evi\'{c}, G. Popivoda, S. Vujo\v{s}evi\'{c}, R. \v{S}krekovski, D. Dimitrov, 
\textit{The $\sigma$-irregularity of chemical trees}, 
MATCH Commun. Math. Comput. Chem. 91 (2024) 267--282.

\bibitem{kovijanic2026delta5}
\v{Z}. Kovijani\'{c} Vuki\'{c}evi\'{c}, G. Popivoda, S. Vujo\v{s}evi\'{c}, R. \v{S}krekovski, D. Dimitrov, J. Sedlar, 
\textit{The $\sigma$-irregularity of trees with maximum degree 5}, 
Discrete Appl. Math. 382 (2026) 124--136.

\bibitem{oz2024novel}
M. S. \"{O}z,
\textit{Novel Sombor-index-like topological indices with geometry and calculus},
Sci. Publ. State Univ. Novi Pazar Ser. A Appl. Math. Inform. Mech. 16 (2) (2024) 147--157.

\bibitem{oz2025more} 
M. S. \"{O}z, I. Gutman, 
\textit{More geometric studies of vertex-degree-based graph indices--tangent Sombor index}, 
Hacettepe J. Math. Stat. 54 (6) (2025) 2195--2205.

\bibitem{rada2021general}
J. Rada, J. M. Rodriguez, J. M. Sigarreta,
\textit{General properties on Sombor indices}, 
Discrete Appl. Math. 299 (2021) 87--97.

\bibitem{ravindrana2025review}
B. A. Ravindran,
\textit{Sombor index on trees -- A review}, 
MATCH Commun. Math. Comput. Chem. 94 (2) (2025) 291--305.

\bibitem{tang2023trees}
Z. Tang, Q. Li, H. Deng, 
\textit{Trees with extremal values of the Sombor-index-like graph invariants}, 
MATCH Commun. Math. Comput. Chem. 90 (1) (2023) 203--222.

\end{thebibliography}
\end{document}